\documentclass{amsart}
\usepackage{amsmath,amsfonts,amssymb,enumerate}

\newcommand{\N}{\mathbb{N}}                   % natural numbers
\newcommand{\R}{\mathbb{R}}                   % real numbers
\newcommand{\C}{\mathbb{C}}                   % complex numbers
                     % mapping phi
\newcommand{\AR}{\mathcal{AR}}
\newcommand{\Reg}{\mathrm{Reg}}

\theoremstyle{plain}
\newtheorem{theorem}{Theorem}[section]
\newtheorem{proposition}[theorem]{Proposition}
\newtheorem{lemma}[theorem]{Lemma}
\newtheorem{corollary}[theorem]{Corollary}

\theoremstyle{definition}

\newtheorem{example}[theorem]{Example}
\newtheorem{remark}[theorem]{Remark}

\numberwithin{equation}{section}

\begin{document}
\title{CR-continuation of arc-analytic maps}

\author{Janusz Adamus}
\address{Department of Mathematics, The University of Western Ontario, London, Ontario, Canada N6A 5B7 -- and --
         Institute of Mathematics, Polish Academy of Sciences, ul. {\'S}niadeckich 8, 00-956 Warsaw, Poland}
\email{jadamus@uwo.ca}
\thanks{Research was partially supported by Natural Sciences and Engineering Research Council of Canada.}

\begin{abstract}
Given a set $E$ in $\C^m$ and a point $p\in E$, there is a unique smallest complex-analytic germ $X_p$ containing $E_p$, called the holomorphic closure of $E_p$. We study the holomorphic closure of semialgebraic arc-symmetric sets. Our main application concerns CR-continuation of semialgebraic arc-analytic mappings: A mapping $f:M\to\C^n$ on a connected real-analytic CR manifold which is semialgebraic arc-analytic and CR on a non-empty open subset of $M$ is CR on the whole $M$.
\end{abstract}

\maketitle

%%%%%%%%%%%%%%%%%%%%%%%%%%%%%%%%%%%%%%%%%%%%%%%%%%
%%%%%%%%%%%%%%%%% Section %%%%%%%%%%%%%%%%%%%%%%%%
%%%%%%%%%%%%%%%%%%%%%%%%%%%%%%%%%%%%%%%%%%%%%%%%%%

\section{Introduction}
\label{sec:intro}

Let $\Omega$ be an open set in $\R^m$. A function $f$ real-analytic in $\Omega$ is called \emph{Nash} if there is a non-constant polynomial $P\in\R[x,y]$, where $x=(x_1,\dots,x_m)$, such that $P(x,f(x))=0$ for all $x\in\Omega$. A real-analytic set is a \emph{Nash set} if it is (locally) defined by Nash real-analytic functions.

In the present note we will be concerned with a more general class of functions, which appear naturally in real geometry. Given a real-analytic set $R$ in an open $\Omega\subset\R^m$, a function $f:R\to\R$ is called \emph{arc-analytic} if it is analytic on every arc, that is, if $f\circ\gamma$ is analytic for every real-analytic $\gamma:(-\varepsilon,\varepsilon)\to R$.

Recall that a set $E$ in $\R^m$ is called \emph{semialgebraic} if it is a finite union of sets of the form
\[
\{x=(x_1,\dots,x_m)\in\R^m:f_1(x)=\dots=f_r(x)=0, g_1(x)>0,\dots,g_s(x)>0\}\,,
\]
where $r,s\in\N$ and $f_1,\dots,f_r,g_1,\dots,g_s\in\R[x]$. A function $f:E\to\R$ is called semialgebraic if its graph $\Gamma_f$ is a semialgebraic subset of $\R^m\times\R$. A \emph{semialgebraic mapping} $f=(f_1,\dots,f_n):E\to\R^n$ is one all of whose components $f_j$ are semialgebraic. Identifying $\C^m$ with $\R^{2m}$, one can speak of real-analytic, Nash, and semialgebraic subsets of $\C^m$.

Our main result is the following variant of a theorem of Shafikov \cite{Sh} on CR-continuation of continuous mappings (for details on CR structure and CR functions see Section~\ref{sec:prelim}).

\begin{theorem}
\label{thm:cr-continuation}
Let $M$ be a connected Nash real-analytic CR submanifold of an open set in $\C^m$ and let $f:M\to\C^n$ be a semialgebraic arc-analytic mapping ($m,n\geq1$). If $f$ is CR on a nonempty open subset of $M$, then $f$ is CR on $M$.
\end{theorem}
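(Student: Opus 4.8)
The plan is to replace the CR condition on $f$ by a condition on the holomorphic closure of its graph, and then to propagate that condition from the open set on which it is known to hold to all of $M$, using the irreducibility of the graph as a semialgebraic arc-symmetric set.

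First I would record the basic objects. Since $f$ is semialgebraic and arc-analytic it is continuous, and its graph $\Gamma:=\Gamma_f\subset\C^{m+n}$ is semialgebraic. Moreover $\Gamma$ is arc-symmetric: a real-analytic arc $\delta=(\alpha,\beta)$ with $\delta((-\varepsilon,0))\subset\Gamma$ has $\alpha((-\varepsilon,0))\subset M$, whence $\alpha((-\varepsilon,\varepsilon))\subset M$ because $M$ is arc-symmetric, and $\beta=f\circ\alpha$ on the whole arc because $f\circ\alpha$ is analytic and agrees with the analytic $\beta$ on an interval; thus $\delta((-\varepsilon,\varepsilon))\subset\Gamma$. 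Since $M$ is a connected real-analytic manifold it is $\AR$-irreducible, and the projection $\pi\colon\Gamma\to M$ is a homeomorphism lifting arcs bijectively, so $\Gamma$ is $\AR$-irreducible as well; in particular every nonempty (Euclidean-)open subset of $\Gamma$ is Zariski dense in $\Gamma$.

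The key translation is a pointwise characterization of the CR condition in terms of holomorphic closure. For $p\in M$ and $q=(p,f(p))$ one always has $\dim_\C X_q(\Gamma)\ge\dim_\C X_p(M)=:d_0$ (independent of $p$ since $M$ is a connected CR manifold), because $\pi$ carries $X_q(\Gamma)$ onto an analytic germ containing $M_p$, hence containing $X_p(M)$. I would then show that $f$ is CR at $p$ if and only if equality holds: the failure of the tangential Cauchy--Riemann equations at $p$ introduces complex directions in $X_q(\Gamma)$ transverse to $X_p(M)$, strictly raising the dimension, whereas if $f$ is CR then $\Gamma$ lies, near $q$, in a $d_0$-dimensional complex-analytic germ projecting onto $X_p(M)$. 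Establishing this equivalence cleanly for a merely arc-analytic (not real-analytic) map---so that holomorphic closure genuinely stands in for the complexification one would use in the real-analytic case---is, I expect, the most delicate point, and it is exactly what the holomorphic-closure machinery for arc-symmetric sets is designed to supply.

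Granting the characterization, the theorem follows by a dimension count. On the nonempty open set $U$ where $f$ is CR we have $\dim_\C X_q(\Gamma)=d_0$ for all $q\in\pi^{-1}(U)$. Let $V$ be the smallest complex algebraic subset of $\C^{m+n}$ containing $\Gamma$. By $\AR$-irreducibility, $\pi^{-1}(U)$ is Zariski dense, so $V$ is the complex Zariski closure of $\Gamma\cap\pi^{-1}(U)$, and the structural properties of the holomorphic closure of semialgebraic arc-symmetric sets identify $\dim_\C V$ with the generic value $d_0$ of $\dim_\C X_q(\Gamma)$. Now for an arbitrary $q=(p,f(p))\in\Gamma$ the germ $V_q$ is a complex-analytic germ containing $\Gamma_q$, so by minimality $X_q(\Gamma)\subseteq V_q$, and hence $\dim_\C X_q(\Gamma)\le\dim_q V\le\dim_\C V=d_0$. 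Combined with the lower bound $\dim_\C X_q(\Gamma)\ge d_0$ this gives equality at every point, and the characterization then yields that $f$ is CR at every $p\in M$. The main obstacle is thus twofold: proving the equivalence between being CR and minimality of the holomorphic-closure dimension for arc-analytic maps, and ensuring that the minimal dimension $d_0$ attained over $U$ cannot jump upward at special points---that is, that the global Zariski closure of the irreducible arc-symmetric graph has dimension equal to the generic holomorphic-closure dimension rather than merely bounding it from below.
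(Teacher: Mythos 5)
Your overall strategy---pass to the graph, observe it is semialgebraic, arc-symmetric and $\AR$-irreducible, and propagate the CR condition by a holomorphic-closure dimension count---is indeed the skeleton of the paper's argument (which adapts Shafikov's proof of \cite[Thm.\,1.3]{Sh} using Theorems~\ref{thm:ar-irred-const-hcdim} and~\ref{thm:ar-irred-is-C-alg}). But there is a genuine gap at the center of your proposal: the claimed pointwise equivalence ``$f$ is CR at $p$ if and only if $\dim_{HC}\Gamma_{(p,f(p))}=\dim_{HC}M_p$'' is never proved, and it is \emph{not} what the machinery of Section~\ref{sec:holoclos-vs-arcsym} supplies. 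That machinery yields constancy of HC dimension on pure-dimensional $\AR$-irreducible sets and identifies the smallest complex-algebraic set containing them; it says nothing about the tangential Cauchy--Riemann equations. The ``if'' direction of your equivalence (minimal HC dimension at every point $\Rightarrow$ CR at every point) is essentially the whole theorem: if it were available off the shelf, Theorem~\ref{thm:cr-continuation} would follow from Theorem~\ref{thm:ar-irred-const-hcdim}(i) in two lines. What the HC machinery actually gives (after the reduction, missing from your proposal, to $M$ generic in $\C^m$ via a semialgebraic embedding) is that $\Gamma_f$ lies in an irreducible complex-algebraic set $Z$ with $\dim_\C Z=m$ whose projection to $\C^m$ is generically finite over $M$. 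This exhibits $f$, away from a proper complex-algebraic set $\Sigma$, as a continuous selection of holomorphic branches, hence CR there; but concluding that $f$ is CR at the remaining points, over $M\cap\Sigma$, requires a removable-singularity argument for continuous CR functions. That step is precisely the part of Shafikov's proof that the paper imports, and it is entirely absent from your proposal.

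The forward direction of your equivalence also needs more care than you give it. For an arc-analytic $f$ that is CR near $p$ but not smooth at $p$, one cannot place $\Gamma_f$ in an $m$-dimensional complex-analytic germ by holomorphic extension of CR functions; the paper instead picks a single good point where $f$ is both CR and real-analytic (using \cite[Thm.\,5.2]{Kur}, that a semialgebraic arc-analytic map is analytic outside a set of codimension $\geq2$, together with density of $\Reg_k\Gamma_f$), establishes the HC dimension there, and only then propagates by Theorem~\ref{thm:ar-irred-const-hcdim}. Two smaller points: your justification of $\dim_{HC}\Gamma_q\geq\dim_{HC}M_p$ (that $\pi$ maps the holomorphic closure of $\Gamma_q$ \emph{onto an analytic germ} containing $M_p$) is false as stated, since images of complex-analytic germs under projections need not be analytic; and the degenerate cases are skipped---when $M$ has CR codimension $0$ the paper needs Theorem~\ref{thm:ar-irred-is-C-alg} plus the fact that the bijective holomorphic projection $\Gamma_f\to M$ is a biholomorphism, while the case $\dim_{CR}M=0$ is trivial. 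Your proposal correctly isolates where the difficulty lies, but defers exactly that difficulty to ``machinery'' that does not contain it.
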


Semialgebraic arc-analytic mappings are necessarily continuous (\cite[Prop.\,5.1]{Kur}).
However, an arc-analytic map need not have a real-analytic graph (see Example~\ref{ex:arc-analytic} below). It follows that Theorem~\ref{thm:cr-continuation} is strictly stronger than Shafikov's \cite[Thm.\,1.3]{Sh} in the Nash setting.

Note also that semialgebraicity itself would not suffice to obtain the conclusion of Theorem~\ref{thm:cr-continuation}. Indeed, a continuous semialgebraic map $f:M\to\C^n$ may have very different CR properties on different open subsets of $M$. Consider, for instance, $M=\C$ and a function $f:M\to\C$ defined as $f(x+iy)=\sqrt{x^4+y^4}$ for $x^4+y^4\leq1$ and $f(x+iy)=1$ for $x^4+y^4\geq1$. Then $f$ is holomorphic in some open neighbourhood of every $z_0$ with, say, $|z_0|>2$, but $f$ is not CR on $M$.
\smallskip

Arc-analytic functions, although relatively unknown among non-specialists, form a very important class in real-analytic geometry. Indeed, Bierstone and Milman (\cite{BM}) proved that arc-analytic semialgebraic functions on a Nash manifold are precisely those that can be made Nash analytic after composition with a finite sequence of blowings-up with smooth nowhere-dense centers. Many classical examples in calculus are arc-analytic but not analytic.

\begin{example}
\label{ex:arc-analytic} (a) The function $f:\R^2\to\R$ defined as $f(x,y)=x^3/(x^2+y^2)$ for $(x,y)\neq(0,0)$ and $f(0,0)=0$ is arc-analytic but not differentiable at the origin. (Observe that $f$ is made Nash after composition with a single blowing-up of the origin; for instance, $f(x,xy)=x/(1+y^2)$.) Note also that the graph $\Gamma_f$ of $f$ is not real-analytic. In fact, the smallest real-analytic subset of $\R^3$ containing $\Gamma_f$ is the \emph{Cartan umbrella} $\{(x,y,z)\in\R^3:z(x^2+y^2)=x^3\}$ (cf. \cite[Ex.\,1.2(1)]{Kur}).

(b) The function $g:\R^2\to\R$ defined as $g(x,y)=\sqrt{x^4+y^4}$ is arc-analytic but not $\mathcal{C}^2$. The graph $\Gamma_g$ of $g$ is not real-analytic. Indeed, the Zariski closure $\{(x,y,z)\in\R^3:z^2=x^4+y^4\}$ of $\Gamma_g$ has two $\mathcal{C}^1$ sheets $z=\pm\sqrt{x^4+y^4}$, but it is irreducible at the origin as a real-analytic set (cf. \cite[Ex.\,1.2(3)]{BM}).
\end{example}
\medskip

The main tool used in the proof of Theorem~\ref{thm:cr-continuation} is the so-called holomorphic closure (HC for short).
Given a set $E$ in $\C^m$ and a point $p\in E$, one defines the \emph{holomorphic closure} of the germ $E_p$ as the unique smallest (with respect to inclusion) complex analytic germ which contains $E_p$; denoted $\overline{E_p}^{HC}$. The \emph{holomorphic closure dimension} of $E_p$, $\dim_{HC}E_p$, is the (complex) dimension of $\overline{E_p}^{HC}$.
Holomorphic closure of real-analytic germs in complex spaces had been studied in \cite{Sh}, \cite{AS} and \cite{AR}. It is closely connected with the CR structure (see, e.g., Prop.\,1.4 and Thm.\,1.5 of \cite{AS}). In \cite{AR}, we considered holomorphic closure in the semialgebraic category. We showed there that HC dimension is tame in this category, which was used to prove the existence of semialgebraic stratification by CR manifolds.

In the present paper we continue the study of the HC structure of semialgebraic sets.
This time we investigate how the HC dimension behaves on arc-symmetric sets. Recall that a set $E\subset\R^m$ is called \emph{arc-symmetric} when, for every real-analytic arc $\gamma:(-1,1)\to\R^m$ with $\gamma((-1,0))\subset E$, there exists $\varepsilon>0$ such that $\gamma((-1,\varepsilon))\subset E$ (cf. \cite[Def.\,1.1]{Kur}).
The concept of arc-symmetry was introduced by Kurdyka \cite{Kur} in the semialgebraic category. It allows one to make sense of the notions of irreducibility and components of a semialgebraic set much like in the algebraic case (see Section~\ref{sec:prelim} for details).

Semialgebraic arc-symmetric subsets of $\R^m$ will be called $\AR$-closed sets (cf. Theorem~\ref{thm:AR-topology}). The following result lies at the center of our arguments.

\begin{theorem}
\label{thm:ar-irred-const-hcdim}
Let $E\subset\C^m$ be a semialgebraic $\AR$-irreducible set of pure dimension. Then:
\begin{itemize}
\item[(i)] There exists an integer $h$ such that $\dim_{HC}E_p=h$ for all $p\in E$.
\item[(ii)] If $Z$ is the smallest complex-algebraic set in $\C^m$ containing $E$, then $Z$ is irreducible and of (complex) dimension $h$.
\end{itemize}
\end{theorem}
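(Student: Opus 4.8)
The plan is to establish the two parts by exploiting the tension between two notions of irreducibility—$\AR$-irreducibility (arc-symmetric, controlled by real-analytic arcs) and complex-algebraic irreducibility—and to use arcs as the bridge between the local (germwise) HC data and the global structure of $E$. Since both parts reference the smallest complex-algebraic set $Z \supset E$, I would begin by fixing $Z$ and proving (ii) first, as it provides the ambient object against which the HC dimension is measured.

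For part (ii), the natural approach is to show that $Z$ cannot decompose nontrivially without contradicting the $\AR$-irreducibility of $E$. The key observation is that $E$ is Zariski-dense in $Z$ by minimality, so $\overline{E}^{Zar} = Z$. If $Z = Z_1 \cup Z_2$ were a nontrivial decomposition into complex-algebraic sets, then $E = (E \cap Z_1) \cup (E \cap Z_2)$, and one would want to argue that each $E \cap Z_i$ is $\AR$-closed (semialgebraic is clear; arc-symmetry of the intersection with a complex-algebraic—hence real-algebraic, hence $\AR$-closed—set should follow from the fact that arc-symmetric sets form a lattice, which is presumably among the topological facts referenced in Theorem~\ref{thm:AR-topology}). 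Arc-symmetry then forces one of the pieces to be all of $E$, say $E \subset Z_1$, contradicting minimality of $Z$ unless $Z_2 \subseteq Z_1$. Hence $Z$ is irreducible. The dimension claim $\dim_{\C} Z = h$ I would defer until after (i), since it is most cleanly read off from the generic behavior of $\dim_{HC}$.

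For part (i), the strategy is to show that $p \mapsto \dim_{HC} E_p$ is both upper and lower semicontinuous in a sufficiently strong sense on the $\AR$-irreducible set $E$, and then invoke connectivity. The inequality $\dim_{HC} E_p \le \dim_{\C} Z$ holds at every point, since $Z_p$ is a complex-analytic germ containing $E_p$, so $h := \max_{p \in E} \dim_{HC} E_p \le \dim_{\C} Z$. Let $F = \{p \in E : \dim_{HC} E_p = h\}$ be the locus of maximal HC dimension; one expects $F$ to be a large, generic subset—this should follow from the tameness results on HC dimension in the semialgebraic category established in \cite{AR}. The heart of the matter is to propagate this maximal value to \emph{every} point of $E$. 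Here the defining property of arc-symmetry enters decisively: given any $q \in E$, I would connect $q$ to a point of $F$ by a real-analytic arc lying in $E$ (using pure-dimensionality and $\AR$-irreducibility to guarantee enough arcs, and arc-symmetry to keep the endpoint inside $E$), and show that the HC dimension cannot drop along such an arc. Concretely, a complex-analytic germ witnessing $\dim_{HC} E_p = h$ at a generic $p$ on the arc should, by real-analyticity of the arc and the semialgebraic structure, continue to a germ at $q$ of the same dimension containing $E_q$; this would give $\dim_{HC} E_q \ge h$, and combined with the universal upper bound, equality throughout.

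The main obstacle I anticipate is precisely this propagation step—showing that $\dim_{HC}$ does not drop, i.e. a suitable lower semicontinuity of HC dimension along arcs inside an $\AR$-irreducible set. Upper semicontinuity of HC dimension is generally the ``easy'' direction (the holomorphic closure can only shrink or stay the same as one moves to nearby points in a flat family), but lower semicontinuity fails for arbitrary sets and must use arc-symmetry in an essential way. The delicate point is to produce, at the limit point $q$, a genuine complex-analytic germ of dimension $h$ containing $E_q$, rather than merely a germ containing the arc. I expect this to require combining the arc-analytic continuation of the local holomorphic closures (perhaps realized via the algebraic set $Z$ from part (ii), whose germ $Z_q$ furnishes a natural $h$-dimensional candidate once irreducibility of $Z$ is known) with the irreducibility of $Z$ to rule out the germ $\overline{E_q}^{HC}$ being a proper lower-dimensional subgerm. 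Once (i) is secured with this common value $h$, the dimension statement in (ii) follows: $\dim_{\C} Z = \dim_{HC} E_p = h$ at any $p$, because the HC germ of maximal dimension must coincide with $Z_p$ on the irreducible $Z$.
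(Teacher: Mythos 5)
Your irreducibility argument for $Z$ is correct and is essentially the paper's Lemma~\ref{lem:C-zar-clos-irred} (if $Z=Z_1\cup Z_2$ nontrivially, then $E\cap Z_1$, $E\cap Z_2$ are $\AR$-closed, so $\AR$-irreducibility forces $E$ into one of them, contradicting minimality). But part (i) --- the heart of the theorem --- is left with a genuine gap, and the mechanism you sketch for closing it cannot work. The containment $\overline{E_q}^{HC}\subseteq Z_q$ only gives the \emph{upper} bound $\dim_{HC}E_q\le\dim Z$; to prevent a drop at $q$ you must exclude that $E_q$ lies in some lower-dimensional complex-\emph{analytic} (possibly non-algebraic) germ, and global irreducibility of the algebraic set $Z$ says nothing about that. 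The paper's own example at the start of Section~\ref{sec:holoclos-vs-arcsym} shows your propagation step is false as stated: the Whitney umbrella $V\subset\C^2$ is $\AR$-irreducible, its smallest complex-algebraic superset is $\C^2$ (irreducible, of dimension $2=\max_p\dim_{HC}V_p$), the maximal-HC locus $\mathcal{S}^2(V)$ is dense in the two-dimensional part, and every point of the stick is joined to that part by a real-analytic arc inside $V$ --- yet $\dim_{HC}$ drops to $1$ along the stick. The only hypothesis failing for $V$ is pure dimension, which your argument uses only to ``guarantee enough arcs,'' not in the propagation itself; so lower semicontinuity of $\dim_{HC}$ along arcs in an $\AR$-irreducible set is simply not true, and any correct proof must make pure dimensionality do real work.

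The paper's route is different and avoids propagating HC dimension along arcs altogether. Arcs are used (via \cite[Cor.\,2.8]{Kur} and arc-symmetry of real-analytic sets) in Lemma~\ref{lem:an-clos-irred} to propagate \emph{membership in a real-analytic set}, proving that the smallest real-analytic set $R$ containing $E=\overline{\Reg_k(E)}$ is irreducible of dimension $k$; then (i) follows by quoting Theorem~\ref{thm:AS-thm3} (from \cite{AS}), a nontrivial prior result in which pure dimensionality is essential. Your derivation of the dimension statement in (ii) also has a gap: $h\le\dim Z$ is the easy direction, but $\dim Z\le h$ requires producing an $h$-dimensional complex-algebraic set containing $E$, which the paper gets from Proposition~\ref{prop:AR-prop5} (gluing the local HC closures over each connected component of $\Reg_k(E)$ into algebraic sets $Z_j$ of dimension $h$). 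Your closing assertion that $\overline{E_p}^{HC}$ ``must coincide with $Z_p$'' is unjustified and in fact false in general: as the paper remarks after the theorem, $Z$ need not realize the holomorphic closure of $E$ at each of its points.
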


Note that the set $Z$ in the above theorem need not realize the HC closure of $E$ at each of its points (see, e.g., \cite[\S\,2]{Sh} or \cite[Ex.\,4.4]{AB}). It does so, however, when $E$ has a complex-analytic germ at some point.
The following result is an arc-symmetric analogue of Shafikov's \cite[Cor.\,1.2]{Sh}.
It will be used in the proof of Theorem~\ref{thm:cr-continuation}.

\begin{theorem}
\label{thm:ar-irred-is-C-alg}
Let $E\subset\C^m$ be a semialgebraic $\AR$-irreducible set of pure dimension. If $E$ contains a point $p$ such that $E_p$ is a complex-analytic germ, then $E$ is irreducible complex-algebraic.
\end{theorem}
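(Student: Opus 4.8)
The plan is to prove that $E$ equals $Z$, the smallest complex-algebraic set containing $E$. By Theorem~\ref{thm:ar-irred-const-hcdim}(ii) this $Z$ is already irreducible complex-algebraic of complex dimension $h$ (where $h$ is the common holomorphic-closure dimension furnished by Theorem~\ref{thm:ar-irred-const-hcdim}(i)), and $E\subseteq Z$ by the very definition of $Z$. Hence the entire task reduces to the reverse inclusion $Z\subseteq E$, after which the conclusion of the theorem is immediate.

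First I would fix the dimensions. Since $E_p$ is a complex-analytic germ, it coincides with its own holomorphic closure, so $\dim_\C E_p=\dim_{HC}E_p=h$ by Theorem~\ref{thm:ar-irred-const-hcdim}(i), whence $\dim_\R E_p=2h$. Purity of $E$ then yields $\dim_\R E=2h$, matching $\dim_\R Z=2h$. Next I would exploit the irreducibility of $Z$: its complex-regular locus $Z^{\C}_{\mathrm{reg}}$ is a connected complex submanifold of dimension $h$, dense in $Z$, and contained in the set $\Reg(Z)$ of points at which $Z$ is a real-analytic manifold of dimension $2h$. Because $Z^{\C}_{\mathrm{reg}}$ is connected and dense in the manifold $\Reg(Z)$, the latter is connected as well; moreover $Z\setminus\Reg(Z)$ is contained in the complex-singular locus of $Z$ and therefore has real dimension at most $2h-2$.

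The heart of the argument is then a propagation step. At a generic point $q\in E$ the set $E$ is, by semialgebraicity, a real-analytic manifold of dimension $2h$, and $q\in\Reg(Z)$ since $Z\setminus\Reg(Z)$ has dimension below $2h$. As $E\subseteq Z$ and both $E$ and $\Reg(Z)$ are $2h$-dimensional manifolds near $q$, the set $E$ is open in $\Reg(Z)$ near $q$; thus $E\cap\Reg(Z)$ has nonempty interior in the connected real-analytic manifold $\Reg(Z)$. Invoking the arc-symmetry of $E$, every point $x\in\Reg(Z)$ can be joined to the interior of $E\cap\Reg(Z)$ by a real-analytic arc lying in $\Reg(Z)$ (a connected real-analytic manifold is real-analytically arc-connected), and membership in $E$ then propagates along the whole of that arc, giving $x\in E$; hence $\Reg(Z)\subseteq E$. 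Finally, since $E$ is closed and $\Reg(Z)$ is dense in $Z$, we obtain $Z=\overline{\Reg(Z)}\subseteq E$, so $E=Z$ is irreducible complex-algebraic.

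The step I expect to be the main obstacle is the propagation along arcs. The literal definition of arc-symmetry only guarantees a small forward extension, so one upgrades it to full propagation by a standard connectedness argument: the set of parameters landing in $E$ is closed and, by repeated application of the extension property, open, hence all of the parameter interval. This relies on the connectedness of $\Reg(Z)$---exactly what the irreducibility of $Z$ from Theorem~\ref{thm:ar-irred-const-hcdim}(ii) supplies---together with the real-analytic arc-connectedness of a connected real-analytic manifold. One must also verify that $E$ genuinely meets $\Reg(Z)$ in full dimension $2h$ rather than concealing its top-dimensional part in the singular locus, which is ensured by the estimate $\dim_\R(Z\setminus\Reg(Z))\le 2h-2$.
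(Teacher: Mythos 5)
Your proof is correct, and it reaches the crucial intermediate fact --- that $E$ contains a nonempty open subset of $\Reg Z$ --- by a genuinely different route than the paper. The paper works locally at the distinguished point $p$: since $E_p$ is complex-analytic, $E\cap U$ is a complex Nash set by \cite[Prop.\,3]{AR}; a maximal-dimensional component $X$ of it extends, via Tworzewski's algebraicity theorem \cite[Thm.\,2.10]{Tw}, to an irreducible complex-algebraic set $Y$ having $X$ as a component of $Y\cap U$; a dimension comparison then forces $Y=Z$, so $X\subset E$ is an open piece of $Z$ near $p$ itself. You instead use the hypothesis only once, to pin down dimensions ($\dim_\R E=2h=\dim_\R Z$, via parts (i) and (ii) of Theorem~\ref{thm:ar-irred-const-hcdim}), and then locate the open piece at a \emph{generic} smooth point $q$ of $E$: since $\mathrm{Sng}\,Z$ has real dimension at most $2h-2$ and $\Reg_{2h}(E)$ has dimension $2h$, some such $q$ lies in $\Reg Z$, where invariance of domain makes $E$ open in $\Reg Z$. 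This avoids both \cite[Prop.\,3]{AR} and Tworzewski's theorem, and in fact proves a slightly more general statement: any pure-dimensional $\AR$-irreducible semialgebraic set whose real dimension equals twice the complex dimension of its smallest complex-algebraic superset must coincide with that superset; the complex-analytic germ at $p$ serves only to certify this dimension equality. The endgame of the two proofs is the same: connectedness of $\Reg Z$ (from irreducibility of $Z$) plus arc-symmetry propagate membership in $E$ from the open piece to all of $\Reg Z$, and closedness of $E$ yields $Z=\overline{\Reg Z}\subset E\subset Z$. Where the paper simply cites \cite[Rem.\,1.6]{Kur} for this propagation, you prove it by hand with the sup/closed-and-open argument along analytic arcs, which is sound; the one point deserving a citation is your claim that a connected real-analytic manifold is real-analytically arc-connected (alternatively, you could invoke \cite[Rem.\,1.6]{Kur} directly, as the paper does, since $\Reg Z$ is a connected semialgebraic smooth manifold).
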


In the next section we recall basic definitions and tools used in this article. Theorems~\ref{thm:ar-irred-const-hcdim} and~\ref{thm:ar-irred-is-C-alg} are proved in Section~\ref{sec:holoclos-vs-arcsym}, which is devoted to the study of holomorphic closure of arc-symmetric sets. The last section contains the proof of Theorem~\ref{thm:cr-continuation}.

%%%%%%%%%%%%%%%%%%%%%%%%%%%%%%%%%%%%%%%%%%%%%%%%%%
%%%%%%%%%%%%%%%%% Section %%%%%%%%%%%%%%%%%%%%%%%%
%%%%%%%%%%%%%%%%%%%%%%%%%%%%%%%%%%%%%%%%%%%%%%%%%%

\section{Preliminaries}
\label{sec:prelim}

\subsection{CR manifolds and CR functions}

Given an $\R$-linear subspace $L$ in $\C^m$ of dimension $d$, one defines the {\it CR dimension} of $L$ to be the largest $k$ 
such that $L$ contains a $\C$-linear subspace of (complex) dimension $k$. Clearly, $0\le k\le \left[\frac{d}{2}\right]$.
A $d$-dimensional real-analytic submanifold $M$ of an open set in $\C^m$ is called a {\it CR manifold} of CR dimension $k$, if the tangent space $T_p M$ has CR dimension $k$ for every point $p\in M$ (the $k$-dimensional complex vector subspace of $T_p M$ will be then denoted by $H_pM$). We write 
$\dim_{CR}M=k$. If $k=0$, then $M$ is called a {\it totally real} submanifold.
The integer $l:=d-2k$ is called the \emph{CR codimension} of $M$, and the pair $(k,l)$ is the \emph{type} of $M$. A CR submanifold $M$ in $\C^m$ is called \emph{generic} when $m=k+l$, where $M$ is of type $(k,l)$.

The notion of \emph{CR function} is usually defined in terms of tangential Cauchy-Riemann equations, as follows. Given a real-analytic CR submanifold $M$ in an open set in $\C^m$, a smooth vector field $X$ on $M$ is called a \emph{CR vector field} if $X_p\in H_pM$ for every $p\in M$. A $\mathcal{C}^1$-smooth function $f:M\to\C$ is CR if $Xf\equiv0$ for every CR vector field $X$ on $M$ of the form $X=\sum_{j=1}^mc_j\frac{\partial}{\partial\bar{z}_j}$.
However, in order to use Theorem~\ref{thm:cr-continuation} in its full generality, we shall need a more general definition that does not require smoothness of $f$. This can be done in terms of distributions: Suppose that $M$ is of type $(k,l)$. A locally integrable function $f:M\to\C$ is called a \emph{CR function} if
\[
\int_Mf\overline{\partial}\alpha=0
\]
for any differential form $\alpha$ of bidegree $(k,k+l-1)$ with compact support (cf. \cite[\S\,21]{Sha}).
A \emph{CR mapping} $f=(f_1,\dots,f_n):M\to\C^n$ is one all of whose components $f_j$ are CR functions.

\subsection{Holomorphic closure of a semialgebraic set germ}

Given a set $E$ in $\C^m$ and a point $p\in E$, one defines the \emph{holomorphic closure} of the germ $E_p$ as the unique smallest (with respect to inclusion) complex analytic germ which contains $E_p$; denoted $\overline{E_p}^{HC}$. The \emph{holomorphic closure dimension} of $E_p$, $\dim_{HC}E_p$, is the (complex) dimension of $\overline{E_p}^{HC}$.
For $d\in\N$, we denote by $\mathcal{S}^d(E)$ the set of those points $p\in E$ for which $\dim_{HC}E_p\geq d$.

For the reader's convenience, we recall the following results from \cite{AS} and \cite{AR} that will be used in the present paper.

\begin{theorem}[{cf. \cite[Thm.\,1.3]{AS}}]
\label{thm:AS-thm3}
Let $E\subset\C^m$ be a connected semialgebraic set of pure dimension, contained in an irreducible real-analytic set of the same dimension. Then the holomorphic closure dimension is constant on $E$.
\end{theorem}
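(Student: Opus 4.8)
The plan is to show that the integer-valued function $p\mapsto\dim_{HC}E_p$ is both upper and lower semicontinuous on $E$; since $E$ is connected, any such function is then constant. Equivalently, I would prove that for every $d$ the superlevel set $\mathcal{S}^d(E)=\{p\in E:\dim_{HC}E_p\ge d\}$ is simultaneously closed and open in $E$.

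First, the upper semicontinuity (closedness of $\mathcal{S}^d(E)$) is the soft direction. Fix $p\in E$ and choose a representative $X$ of $\overline{E_p}^{HC}$ on a neighborhood $U$ of $p$, i.e. a complex-analytic subset $X\subseteq U$ with $E\cap U\subseteq X$ and $\dim_pX=\dim_{HC}E_p$. For any $q\in E\cap U$ the germ $X_q$ is a complex-analytic germ containing $E_q$, so $\overline{E_q}^{HC}\subseteq X_q$ and hence $\dim_{HC}E_q\le\dim_qX$. Because the local dimension of a complex-analytic set is itself upper semicontinuous, $\dim_qX\le\dim_pX=\dim_{HC}E_p$ for $q$ close to $p$. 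Thus $\dim_{HC}E_q\le\dim_{HC}E_p$ near $p$, which is exactly upper semicontinuity.

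The heart of the matter is the reverse inequality. Write $d=\dim E=\dim R$, where $R$ is the ambient irreducible real-analytic set. Since $R$ is irreducible, its regular locus $\Reg R$ is a connected real-analytic manifold of dimension $d$, and because $\dim(R\setminus\Reg R)<d$ while $E$ has pure dimension $d$, the set $E\cap\Reg R$ is dense in $E$. On $\Reg R$ I would compute $\dim_{HC}$ through a local real-analytic parametrization $\vp$: the holomorphic closure of $R_q$ at a manifold point is the germ of the Zariski closure of the image of the complexified map $\vp^{\C}$, so $\dim_{HC}R_q$ equals the generic complex rank of $\vp^{\C}$. The generic rank of a holomorphic map is the same on every nonempty open subset of its (connected) domain, and these local complexifications patch along the connected manifold $\Reg R$; consequently $\dim_{HC}R_q$ equals a single integer $h$ for every $q\in\Reg R$. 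If moreover $q\in E\cap\Reg R$, then $E_q$ is a $d$-dimensional semialgebraic subset of the $d$-manifold $R$ and hence has nonempty interior in $R$, so $\overline{E_q}^{HC}=\overline{R_q}^{HC}$ and $\dim_{HC}E_q=h$.

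It remains to propagate the value $h$ from the dense set $E\cap\Reg R$ to the singular points $p\in E\cap\mathrm{Sing}\,R$, and this is the step I expect to be the main obstacle. Approaching such a $p$ by regular points of value $h$ and invoking the upper semicontinuity just proved gives $\dim_{HC}E_p\ge h$ immediately; the difficulty is to exclude an upward jump, i.e. to show $\dim_{HC}E_p\le h$. For this I would analyze the finitely many local branches of $R$ at $p$: each accumulates points of $\Reg R$ of holomorphic-closure dimension $h$, and one shows that the holomorphic closure of $R_p$ is the union of the holomorphic closures contributed by these branches, each of complex dimension $h$. The semialgebraic (in particular, tame) nature of the data guarantees that only finitely many branches occur and that no branch can force a strictly larger complex-analytic germ at $p$; combining $\dim_{HC}E_p\le\dim_{HC}R_p\le h$ with the lower bound yields $\dim_{HC}E_p=h$. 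This establishes that $\mathcal{S}^d(E)$ is also open, and constancy on the connected set $E$ follows.
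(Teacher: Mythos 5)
The first thing to note is that the paper does not prove this statement at all: it is imported verbatim from \cite[Thm.\,1.3]{AS} as background, so the comparison must be against the strategy that such a proof actually requires. Your upper-semicontinuity argument is fine, and your computation of $\dim_{HC}$ at regular points via the generic rank of a complexified parametrization is the standard (correct) device. The fatal gap is the sentence ``Since $R$ is irreducible, its regular locus $\Reg R$ is a connected real-analytic manifold of dimension $d$.'' This equivalence holds for complex-analytic sets but is false in the real-analytic category: the cone $\{x^2+y^2=z^2\}\subset\R^3$ is irreducible as a real-analytic (even real-algebraic) set, yet its regular locus consists of two connected components (the two nappes); also, an irreducible real-analytic set need not be pure-dimensional (the Cartan umbrella has manifold points of dimension $1$ and $2$), so $\Reg R$ need not even be a manifold of a single dimension. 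Consequently your patching argument only yields that $\dim_{HC}R_q$ is constant on \emph{each connected component} of $\Reg_d R$; it says nothing about how the values on different components compare, and that bridging is precisely the hard content of the theorem.

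Moreover, this gap cannot be repaired by a soft argument, because the statement your method would actually prove --- constancy of the HC dimension near the top-dimensional regular locus of an \emph{arbitrary} irreducible real-analytic set --- is false: the paper itself points this out in the discussion preceding Proposition~\ref{prop:an-constant-hcdim}, citing \cite[Ex.\,6.1]{AS}, where an irreducible transcendental real-analytic set has regular components with different HC dimensions. Any correct proof must therefore use the semialgebraicity of $E$ in an essential way exactly at the step where you invoke connectedness of $\Reg R$; in your proposal semialgebraicity enters only through soft density and finiteness remarks. This is also how the paper's own, closely related arguments proceed: Lemma~\ref{lem:an-clos-irred} bridges distinct components of the regular locus by real-analytic arcs supplied by Kurdyka's theory (\cite[Cor.\,2.8]{Kur}, available because the sets there are semialgebraic and arc-symmetric), and Propositions~\ref{prop:alg-constant-hcdim} and~\ref{prop:an-constant-hcdim} do the bridging through irreducibility of the real \emph{Zariski} closure, which again exists in the right dimension only for semialgebraic data. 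A secondary, non-fatal weakness: your exclusion of an upward jump of $\dim_{HC}$ at points of $E\cap\mathrm{Sing}\,R$ (``no branch can force a strictly larger complex-analytic germ'') is asserted rather than proved, and it is itself a nontrivial step; but it is the connectedness claim that breaks the proof.
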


\begin{lemma}[{\cite[Lem.\,2]{AR}}]
\label{lem:AR-lem2}
Let $E$ be a connected real-analytic submanifold of an open set in $\C^m$. There exists a unique smallest complex-algebraic subset $Z$ of $\C^m$ containing $\overline{E}$ and such that, for every $p\in\overline{E}$, $Z_p$ is the smallest complex-algebraic germ containing $E_p$. Moreover, $Z$ is irreducible.
\end{lemma}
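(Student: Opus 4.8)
The plan is to realize $Z$ as the (complex) Zariski closure of $E$ and to reduce all three assertions---existence/uniqueness, the germ-wise minimality, and irreducibility---to the identity principle for real-analytic functions on the connected manifold $E$.

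First I would set $I(E):=\{P\in\C[z_1,\dots,z_m]:P|_E\equiv0\}$ and put $Z:=V(I(E))$, the common zero locus of $I(E)$. Since a complex-algebraic set is closed in the Euclidean topology and contains $E$, it contains $\overline E$, so $Z\supseteq\overline E$. Any complex-algebraic set $W\supseteq\overline E$ is cut out by polynomials that vanish on $E$, hence lie in $I(E)$, so $W\supseteq V(I(E))=Z$; this gives both existence and uniqueness of the smallest such $Z$. (Equivalently, one may invoke Noetherianity of $\C[z]$ to see that the intersection of all complex-algebraic sets containing $\overline E$ is again complex-algebraic.)

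The key tool for the remaining two claims is that, because $E$ is a connected real-analytic submanifold, the restriction $P|_E$ of any polynomial $P$ is a complex-valued real-analytic function on the connected real-analytic manifold $E$; hence if $P|_E$ vanishes on a nonempty open subset of $E$, it vanishes on all of $E$ (apply the real identity theorem to $\mathrm{Re}\,P|_E$ and $\mathrm{Im}\,P|_E$). For the germ-wise statement, fix $p\in\overline E$ and let $J_p$ be the ideal of polynomials vanishing on the germ $E_p$, so that the smallest complex-algebraic germ containing $E_p$ is $V(J_p)_p$. A polynomial lies in $J_p$ exactly when it vanishes on $E\cap U$ for some open $U\ni p$; since $p\in\overline E$, such an intersection is a nonempty open subset of $E$, and the identity principle forces the polynomial into $I(E)$. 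Thus $J_p=I(E)$ and $V(J_p)_p=Z_p$, which is the asserted germ-wise minimality. A point worth flagging is that this must hold also at $p\in\overline E\setminus E$, where $E_p$ is still the germ of a nonempty relatively open piece of $E$, so the argument is unaffected.

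Finally, I would prove $Z$ irreducible by showing $I(E)$ is prime. If $PQ\in I(E)$ and $P\notin I(E)$, then $\{P|_E\neq0\}$ is a nonempty open subset of $E$ on which $Q|_E=0$; by the identity principle $Q|_E\equiv0$, so $Q\in I(E)$. Hence $I(E)$ is prime and $Z=V(I(E))$ is irreducible. The main obstacle, as in all three steps, is exactly this passage from local (open or germ-wise) vanishing to global vanishing, which is where connectedness and the real-analytic manifold structure of $E$ are essential; once that is in hand the algebraic bookkeeping is routine.
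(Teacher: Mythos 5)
This lemma is stated in the paper without proof---it is quoted directly from \cite[Lem.\,2]{AR}---so there is no in-paper argument to compare against; your proposal is correct and follows the standard route of that reference, namely taking $Z=V(I(E))$ for $I(E)$ the ideal of complex polynomials vanishing on $E$ and reducing the germ-wise minimality and the primality of $I(E)$ to the identity principle for real-analytic functions on the connected manifold $E$, including the needed care at points of $\overline{E}\setminus E$. The only two steps worth making explicit in a final write-up are that $J_p$ is finitely generated (Noetherianity of $\C[z]$), so a single neighbourhood of $p$ works for all generators and hence $E_p\subseteq V(J_p)_p$, i.e.\ $V(J_p)_p$ really is the smallest complex-algebraic germ containing $E_p$; and that $I(Z)=I(E)$ (immediate, since $E\subseteq Z=V(I(E))$ gives $I(Z)\subseteq I(E)$ while every element of $I(E)$ vanishes on $V(I(E))$ by definition), so primality of $I(E)$ does yield irreducibility of $Z$ without any appeal to the Nullstellensatz.
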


\begin{proposition}[{\cite[Prop.\,5]{AR}}]
\label{prop:AR-prop5}
Let $E$ be a connected semialgebraic real-analytic submanifold of an open subset of $\C^m$, and let $Z$ be the unique irreducible complex-algebraic set from the above lemma. Then, at every point $p\in\overline{E}$, the holomorphic closure $\overline{E_p}^{HC}$ is a union of some analytic-irreducible components of $Z_p$. In particular, the holomorphic closure dimension is constant on $E$.
\end{proposition}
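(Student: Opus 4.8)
The plan is to pin down the two easy inclusions and the local dimension of $Z$ first, and then to locate the analytic components of the holomorphic closure inside those of $Z_p$. Fix $p\in\overline{E}$. Since $Z_p$ is complex-algebraic, hence complex-analytic, and contains $E_p$, the minimality of the holomorphic closure gives $\overline{E_p}^{HC}\subseteq Z_p$. Next, because $Z$ is irreducible (Lemma~\ref{lem:AR-lem2}) of some dimension $h:=\dim_{\C}Z$, the germ $Z_p$ is pure-dimensional of dimension $h$ at every $p\in Z$ -- this is the standard fact that the analytic localization of an irreducible algebraic variety is equidimensional, a consequence of excellence of algebraic local rings. Write $Z_p=W_1\cup\dots\cup W_r$ for its analytic-irreducible components, each of dimension $h$.

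The reduction step is to show that each analytic-irreducible component of $\overline{E_p}^{HC}$ is one of the $W_i$. I would treat first the main case $p\in E$. There $E_p$ is the germ of a connected smooth manifold, hence irreducible as a real-analytic germ. Each intersection $E_p\cap W_i$ is a real-analytic subgerm of $E_p$ and $E_p=\bigcup_i(E_p\cap W_i)$; irreducibility forces $E_p\subseteq W_{i_0}$ for a single index $i_0$. Consequently $\overline{E_p}^{HC}\subseteq W_{i_0}$, and in particular $\overline{E_p}^{HC}$ is itself analytic-irreducible. It then suffices to prove the dimension equality $\dim_{HC}E_p=h$, for then the irreducible germ $\overline{E_p}^{HC}$ of dimension $h$ sitting inside the irreducible germ $W_{i_0}$ of dimension $h$ must coincide with it.

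The crux -- and the place where semialgebraicity is essential -- is this dimension equality, i.e.\ ruling out a ``transcendental drop'' in which $\overline{E_p}^{HC}$ is a proper analytic, non-algebraic subgerm of $W_{i_0}$ whose algebraic closure is nonetheless all of $Z_p$. To control this I would complexify. Writing real-analytic functions near $p$ as convergent series in $(z-p,\bar z-\bar p)$ and replacing $\bar z$ by an independent variable $\zeta$, let $\mathcal{E}\subset\C^m_z\times\C^m_{\zeta}$ be the complexification of $E_p$, an irreducible complex-analytic germ of dimension $d=\dim_{\R}E$. A holomorphic $g(z)$ vanishes on $E_p$ if and only if it vanishes on $\mathcal{E}$, by uniqueness of analytic continuation from the maximally real locus $\{\zeta=\bar z\}\cap\mathcal{E}=E_p$; hence $\overline{E_p}^{HC}$ is the analytic closure of the projection $\pi(\mathcal{E})$ under $\pi\colon(z,\zeta)\mapsto z$, and in particular $\dim_{HC}E_p=d-f$, where $f$ is the generic fibre dimension of $\pi|_{\mathcal{E}}$. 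Now semialgebraicity enters: $E$ is Nash, so $\mathcal{E}$ is a Nash germ, i.e.\ an analytic-irreducible component, of the same dimension $d$, of an irreducible algebraic variety $A$ (its Zariski closure). Since $\mathcal{E}$ is Zariski-dense in $A$, the generic fibre dimensions of $\pi|_{\mathcal{E}}$ and $\pi|_A$ agree, whence $\overline{E_p}^{HC}$ is a full analytic component of the pure-dimensional germ $B_p$, where $B:=\overline{\pi(A)}^{\,\mathrm{Zar}}$ has $\dim B=d-f=\dim_{HC}E_p$. As $B$ is algebraic and contains $E_p$, Lemma~\ref{lem:AR-lem2} gives $Z_p\subseteq B_p$, and then $\dim_{HC}E_p=\dim B=\dim B_p\ge\dim Z_p=h\ge\dim_{HC}E_p$ forces $\dim_{HC}E_p=h$, completing the interior case.

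Finally, for a boundary point $p\in\overline{E}\setminus E$ the germ $E_p$ is a finite union of manifold sheets rather than a single one; applying the complexification argument to each of the finitely many local analytic branches of $E$ at $p$ shows that each contributes a full component $W_i$ of $Z_p$, and $\overline{E_p}^{HC}$ is their union, again a union of components of $Z_p$. This establishes the component statement at every $p\in\overline{E}$. The constancy of the holomorphic closure dimension on $E$ is then immediate, since by the interior case $\dim_{HC}E_p$ equals the fixed number $h=\dim_{\C}Z$ for every $p\in E$ (this also follows from Theorem~\ref{thm:AS-thm3}). I expect the genuinely delicate point to be the crux above: establishing that $\overline{E_p}^{HC}$ is Nash of the same dimension as its algebraic closure, i.e.\ that projecting the algebraic complexification loses no dimension when one passes from the analytic to the algebraic closure; everything else is formal.
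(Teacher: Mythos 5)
Your statement is one the paper itself does not prove --- it is quoted from \cite[Prop.\,5]{AR} --- so your proposal can only be measured against the complexification-and-Zariski-closure route of that reference, which is indeed the route you take. For interior points $p\in E$ your argument is essentially correct: the reduction to a single branch $W_{i_0}$ via irreducibility of the smooth germ $E_p$, the pure $h$-dimensionality of $Z_p$, and the closing chain via $\overline{E_p}^{HC}\subseteq Z_p\subseteq B_p$ are all sound (your $B$ essentially reconstructs $Z$). One step, however, is misplaced: the assertion ``in particular $\dim_{HC}E_p=d-f$'' made \emph{before} semialgebraicity enters is false as a general analytic principle. The analytic closure of the image of an analytic germ can have dimension strictly larger than $d-f$ (Osgood-type maps such as $(u,v)\mapsto(u,uv,uve^{v})$, whose image germ at $0$ lies in no proper analytic germ of $\C^3$; this transcendental pathology is exactly why \cite[Ex.\,6.1]{AS} exists and why no purely analytic argument can give your equality). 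What is true, and all your final chain needs, is the inequality $\dim_{HC}E_p\geq d-f$: at points of maximal rank of $\pi|_{\mathcal{E}}$, which are dense in $\mathcal{E}$ and accumulate at the base point, the image contains $(d-f)$-dimensional manifold pieces, so every analytic representative of $\overline{E_p}^{HC}$ has local dimension at least $d-f$ at $p$; the reverse inequality then comes from $\overline{E_p}^{HC}\subseteq Z_p\subseteq B_p$ with $\dim B_p=d-f$. You should supply this rank argument and drop the premature equality.

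The genuine gap is the boundary case $p\in\overline{E}\setminus E$, which you dispatch in two lines by ``applying the complexification argument to each of the finitely many local analytic branches of $E$ at $p$.'' At such a point there are in general no analytic branches of $E$: $\overline{E}$ need not be real-analytic at $p$ (already $E=\{z\in\C: z=t,\ 0<t<1\}$ at $p=0$ is a half-branch), the germ components (``sheets'') of $\overline{E}_p$ are merely closed semialgebraic sets, and the complexification machinery --- power series in $(z-p,\bar z-\bar p)$, the maximally real locus $\{\zeta=\bar z\}$ --- is set up only for real-analytic germs of full dimension at the reference point. So the mechanism you invoke does not exist as stated. A correct treatment either complexifies the analytic-irreducible components of the real Zariski closure of $E$ at $p$ that contain each sheet (redoing your dimension bookkeeping for singular, non-smooth branches), or, more cleanly, reduces to the interior case by an accumulation argument: if $V$ is a representative of $\overline{E_p}^{HC}$, then $V_q\supseteq\overline{E_q}^{HC}$ for every interior $q\in E$ near $p$, and by the interior case these are $h$-dimensional pieces of $Z$, so $\dim_q(V\cap Z)=h$; by upper semicontinuity of local dimension every point of $\overline{E}$ near $p$ has $\dim_x(V\cap Z)=h$, whence the germ $E_p$ lies in the union $W$ of the $h$-dimensional components of $(V\cap Z)_p$, each of which is a full component of $Z_p$ by pure-dimensionality; minimality of the holomorphic closure then forces $\overline{E_p}^{HC}=W$. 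Some argument of this type is precisely what makes the statement at boundary points nontrivial, and it is missing from your proposal.
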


\subsection{Nash real-analytic sets}

There is a little ambiguity in literature regarding (real) Nash sets. In fact, there are two common definitions, which do not coincide in general. According to one definition, a \emph{Nash set} is a real-analytic subset $R$ of an open set $\Omega$ in $\R^m$ with the property that for every $p\in\Omega$ there is an open neighbourhood $U$ of $p$ in $\Omega$ such that $R\cap U$ is defined by the vanishing of finitely many functions that are Nash analytic in $U$ (as in Section~\ref{sec:intro}). This is how real-analytic Nash sets are understood, for instance, in \cite{BER}. Another classical monograph, \cite{BCR} defines $R\subset\R^m$ to be Nash when it is real-analytic in an open $\Omega$ in $\R^m$ and a semialgebraic subset of $\R^m$.

\begin{remark}
\label{rem:two-Nashes}
It is not difficult to see that a set $R$ which is Nash in the second sense is also Nash in the first sense. The two notions coincide when $R$ is compact or, more generally, when $R$ admits a finite partition into connected smooth manifolds. (They also always coincide in the complex-analytic case.) However, in general, a set which is Nash in the first sense need not be semialgebraic (for example, its regular locus may have an infinite number of connected components).
\end{remark}

As far as Theorem~\ref{thm:cr-continuation} goes, one can disregard the above ambiguity, because the domain of a semialgebraic mapping $f$ is always semialgebraic (by the Tarski-Seidenberg Theorem (see, e.g., \cite[Prop.\,2.2.7]{BCR}), as a projection of the semialgebraic set $\Gamma_f$).

\subsection{Semialgebraic arc-symmetric sets}

The notion of arc-symmetry was introduced by Kurdyka \cite{Kur} in the semialgebraic category. Its usefulness comes from the following fundamental result.

\begin{theorem}[{\cite[Thm.\,1.4]{Kur}}]
\label{thm:AR-topology}
There exists a unique noetherian topology $\AR$ on $\R^m$, such that the $\AR$-closed sets are precisely the semialgebraic arc-symmetric sets in $\R^m$.
\end{theorem}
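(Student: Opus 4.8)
The plan is to verify directly that the family $\mathcal{F}$ of semialgebraic arc-symmetric subsets of $\R^m$ satisfies the axioms for the closed sets of a noetherian topology; uniqueness is then automatic, since a topology is determined by its closed sets. Three of the required properties are immediate or easy: $\emptyset$ and $\R^m$ belong to $\mathcal{F}$; finite unions and intersections of semialgebraic sets are semialgebraic; and a finite intersection $E_1\cap\dots\cap E_k$ of arc-symmetric sets is arc-symmetric, because an analytic arc $\gamma$ with $\gamma((-1,0))\subset\bigcap_j E_j$ lands in each $E_j$, so arc-symmetry of $E_j$ yields some $\varepsilon_j>0$ and we take $\varepsilon=\min_j\varepsilon_j$. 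The two substantial points are (a) closure under finite unions, and (b) the noetherian (descending chain) property, which will also upgrade closure under finite intersections to closure under arbitrary intersections.

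For (a), let $E_1,E_2\in\mathcal{F}$ and let $\gamma:(-1,1)\to\R^m$ be analytic with $\gamma((-1,0))\subset E_1\cup E_2$. Writing $E_i$ as a finite Boolean combination of polynomial conditions $P=0$, $Q>0$ and composing with $\gamma$, each $P\circ\gamma$, $Q\circ\gamma$ is real-analytic on the connected interval $(-1,1)$, hence either vanishes identically or has isolated zeros. Consequently $\gamma^{-1}(E_i)\cap(-1,0)$ is a finite union of points and open intervals, and therefore at least one of $\gamma^{-1}(E_1)$, $\gamma^{-1}(E_2)$ contains a left neighbourhood $(-\delta,0)$ of $0$: otherwise both complements would contain a left neighbourhood of $0$, and on their intersection $\gamma$ would avoid $E_1\cup E_2$. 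Say $\gamma((-\delta,0))\subset E_1$; after an affine reparametrization, arc-symmetry of $E_1$ extends $\gamma$ into $E_1$ past $0$, whence $\gamma((-1,\varepsilon))\subset E_1\cup E_2$ for a suitable $\varepsilon>0$.

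The crux is (b), and its conceptual heart is the following \emph{rigidity lemma}: if $E$ is arc-symmetric and $C$ is a connected $d$-dimensional real-analytic manifold with $\dim(E\cap C)=d$, then $C\subset E$. I would prove this by showing that $U:=\mathrm{int}_C(E\cap C)$ is both open and closed in $C$: if a point $p$ lay on the $C$-boundary of $U$, I would join a point of $U$ near $p$ to a hypothetical point $q\in C\setminus E$ by an analytic arc inside a chart, and use the same identity-theorem structure of $\gamma^{-1}(E)$ to produce an arc that leaves $E$ immediately after a segment lying in $E$, contradicting arc-symmetry; hence $U=C$ and $C=\overline{U}^{\,C}\subset E$. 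This lemma expresses precisely why an arc-symmetric set cannot shed a top-dimensional chunk of a regular stratum, and it is the real content behind the finiteness.

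Granting the lemma, I would prove the descending chain condition by induction on dimension. Given a chain $E_0\supset E_1\supset\dots$, the integers $\dim E_i$ stabilize, so I may assume $\dim E_i=d$ for all $i$, the case $d=0$ (finite sets) being clear. Let $V$ be the Zariski closure of $E_0$ and let $C_1,\dots,C_r$ be the connected components of the top-dimensional regular locus $\Reg_d(V)$. Since $\Reg_d(E_i)\subset\Reg_d(V)$ and is nonempty, the rigidity lemma shows that $J_i:=\{\,j:C_j\subset E_i\,\}$ is a nonempty decreasing subset of $\{1,\dots,r\}$, hence eventually equal to some $J$; put $K:=\bigcup_{j\in J}C_j$. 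For large $i$ one has $E_i\supset K$, while $E_i\setminus K$ has dimension $<d$, since on the remaining components and on $V\setminus\Reg_d(V)$ the set $E_i$ is at most $(d-1)$-dimensional. The sets $T_i:=E_i\setminus K$ decrease, so their Zariski closures $W_i$ decrease, and by noetherianity of the real Zariski topology they stabilize at an algebraic set $W$ with $\dim W=\dim T_i<d$. Then $F_i:=E_i\cap W$ is arc-symmetric (algebraic sets being arc-symmetric), has dimension $<d$, and satisfies $E_i=K\cup F_i$ with $K$ fixed, so a strict drop $E_i\supsetneq E_{i+1}$ forces $F_i\supsetneq F_{i+1}$; the inductive hypothesis then makes $(F_i)$ stabilize, a contradiction. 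This yields the descending chain condition, i.e. noetherianity. Finally, for an arbitrary family $\{E_\alpha\}\subset\mathcal{F}$, the finite sub-intersections form a family that by this chain condition has a minimal element, which must equal $\bigcap_\alpha E_\alpha$; hence arbitrary intersections lie in $\mathcal{F}$, and $\mathcal{F}$ is the family of closed sets of a unique noetherian topology. The main obstacle is part (b), and within it the rigidity lemma together with the dimension-plus-Zariski-closure bookkeeping needed to drive the induction.
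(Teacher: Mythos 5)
The paper contains no proof of this statement at all: it is imported verbatim from Kurdyka's paper (cited as [Kur, Thm.\,1.4]), so there is no internal argument to compare against, and your proposal can only be measured against Kurdyka's original proof. Structurally, your argument is a faithful reconstruction of that proof and is essentially sound: closure under finite unions and intersections via sign stabilization of finitely many analytic functions along an arc; a rigidity lemma (this is precisely the property the present paper cites as [Kur, Rem.\,1.6]) saying that a semialgebraic arc-symmetric set meeting a connected analytic manifold in full dimension must contain it; and the descending chain condition by induction on dimension, writing each $E_i$ (for large $i$) as the fixed union $K$ of the top-dimensional regular components of the Zariski closure that it contains, plus a lower-dimensional arc-symmetric remainder $F_i=E_i\cap W$, with noetherianity of the real Zariski topology pinning down $W$ and the inductive hypothesis finishing. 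The deduction of arbitrary intersections and of uniqueness from the chain condition is also correct.

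Two points need repair, one of which is a genuine (though fixable) error. First, the inclusion $\Reg_d(E_i)\subset\Reg_d(V)$ is false in general: take $E_i$ to be one global analytic branch of the nodal cubic $\{y^2=x^2(x+1)\}\subset\R^2$, which is semialgebraic and arc-symmetric; the node is a regular point of the branch but a singular point of its Zariski closure $V$, the full cubic. The conclusion you draw from it, namely $J_i\neq\varnothing$, survives via a dimension count you already use elsewhere: since $\dim E_i=d$, $E_i\subset V$, and $\dim\bigl(V\setminus\Reg_d(V)\bigr)<d$, the set $E_i\cap\Reg_d(V)$ has dimension $d$, hence $\dim(E_i\cap C_j)=d$ for some $j$, and the rigidity lemma then gives $C_j\subset E_i$. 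Second, the rigidity lemma is the crux and is only sketched. To complete it you should isolate the identity principle it rests on: if $E$ is semialgebraic arc-symmetric and $\gamma$ is analytic on an interval $I$ with $\gamma^{-1}(E)$ having nonempty interior, then $\gamma(I)\subset E$; this holds because $\gamma^{-1}(E)$ is locally a finite union of points and intervals, so the interior of $\gamma^{-1}(E)$ is open and, by arc-symmetry applied to reparametrized arcs at its boundary points, also closed in $I$. With this in hand, straight segments inside charts show that $\mathrm{int}_C(E\cap C)$ is open and closed in $C$. You also use tacitly that a $d$-dimensional semialgebraic subset of a $d$-manifold has nonempty interior in it (cell decomposition plus invariance of domain), and your description of $\gamma^{-1}(E_i)\cap(-1,0)$ as a \emph{finite} union of points and intervals is only valid locally, since zeros of $P\circ\gamma$ may accumulate at the endpoints of the open interval; only the structure near $0$ matters, and there it is correct.
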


(A topology is called \emph{noetherian}, when every descending sequence of closed sets is stationary.)
For a detailed exposition of arc-symmetric sets, we refer the reader to \cite{Kur} or \cite{KP}. Here we recall only a few basic properties that will be used in the present paper.

The class of $\AR$-closed sets includes, in particular, the real-algebraic sets as well as the Nash real-analytic sets (in the sense of \cite{BCR}). The $\AR$-topology is strictly finer than the Zariski topology on $\R^m$ (see, e.g., \cite[Ex.\,1.2]{Kur}). Moreover, it follows from the semialgebraic Curve Selection Lemma that $\AR$-closed sets are closed in the Euclidean topology in $\R^m$ (see \cite[Rem.\,1.3]{Kur}).

An $\AR$-closed set $E$ is called \emph{$\AR$-irreducible}, when $E=E_1\cup E_2$ with $E_1,E_2$ $\AR$-closed implies that $E=E_1$ or $E=E_2$. It follows from Theorem~\ref{thm:AR-topology} that every $\AR$-closed set $E$ admits a (unique) finite decomposition $E=E_1\cup\dots\cup E_s$ into $\AR$-irreducible subsets such that $E_j\not\subset\bigcup_{k\neq j}E_k$ for $j=1,\dots,s$. The sets $E_j$ of this decomposition are called the $\AR$-irreducible components of $E$.

In the next section, we will study the HC structure of $\AR$-closed sets in a complex space. These sets will play an important role in the proof of Theorem~\ref{thm:cr-continuation}, because, according to \cite[Prop.\,5.1]{Kur}, the graph of a semialgebraic arc-analytic mapping is an $\AR$-closed set.

%%%%%%%%%%%%%%%%%%%%%%%%%%%%%%%%%%%%%%%%%%%%%%%%%%
%%%%%%%%%%%%%%%%% Section %%%%%%%%%%%%%%%%%%%%%%%%
%%%%%%%%%%%%%%%%%%%%%%%%%%%%%%%%%%%%%%%%%%%%%%%%%%

\section{Holomorphic closure of an arc-symmetric set}
\label{sec:holoclos-vs-arcsym}

In general, $\AR$-closed sets (even $\AR$-irreducible) need not be of pure holomorphic closure dimension. Even worse than that, the HC-filtration $\{\mathcal{S}^d(E)\}_{d\in\N}$ of an $\AR$-closed set $E$ need not be $\AR$-closed itself. For example, the \emph{Whitney umbrella} embedded in $\C^2$ as
\[
V=\{(x_1+iy_1,x_2+iy_2)\in\C^2:x_1^2-x_2y_1^2=y_2=0\}
\]
happens to be $\AR$-irreducible (cf. \cite[Rem.\,3.2]{Kur}), but $\mathcal{S}^2(V)$ coincides with the two-dimensional part of $V$ (since the 'stick' $\{x_1=y_1=y_2=0\}$ is contained in a complex line), which is not $\AR$-closed.

Nonetheless, according to Theorem~\ref{thm:ar-irred-const-hcdim}, the above problems can occur only in the mixed-dimensional case.

\subsection{Proof of Theorem~\ref{thm:ar-irred-const-hcdim}}
\label{subsec:proof2}

The proof of Theorem~\ref{thm:ar-irred-const-hcdim} will rely on the following two lemmas.

\begin{lemma}
\label{lem:C-zar-clos-irred}
Let $E$ be an $\AR$-irreducible set in $\C^m$. If $Z$ is the smallest (with respect to inclusion) complex-algebraic set containing $E$, then $Z$ is irreducible.
\end{lemma}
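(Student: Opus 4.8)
The plan is to argue by contradiction, exploiting the fact that a complex-algebraic set, viewed through the identification $\C^m\cong\R^{2m}$, is in particular real-algebraic, and hence $\AR$-closed. This observation is what allows the complex-algebraic (Zariski, over $\C$) structure of $Z$ to interact with the $\AR$-topology governing $E$.

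First I would pass to the decomposition $Z=Z_1\cup\dots\cup Z_s$ of $Z$ into its complex-algebraic irreducible components, and suppose for contradiction that $s\geq2$, so that each $Z_j$ is a proper subset of $Z$. Writing each defining complex polynomial equation of $Z_j$ in terms of the real and imaginary parts of the coordinates exhibits every $Z_j$ as a real-algebraic subset of $\R^{2m}$; by the remarks following Theorem~\ref{thm:AR-topology}, real-algebraic sets are $\AR$-closed. Since the $\AR$-closed sets are exactly the closed sets of the noetherian topology $\AR$, finite intersections and finite unions of $\AR$-closed sets are again $\AR$-closed, so each $E\cap Z_j$ is $\AR$-closed. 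As $E\subset Z=Z_1\cup\dots\cup Z_s$, this yields a decomposition $E=(E\cap Z_1)\cup\dots\cup(E\cap Z_s)$ of $E$ into $\AR$-closed sets. Applying the definition of $\AR$-irreducibility iteratively (grouping the pieces two at a time, which is legitimate since finite unions of $\AR$-closed sets are $\AR$-closed), I obtain an index $j_0$ with $E=E\cap Z_{j_0}$, that is, $E\subset Z_{j_0}$.

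It remains to derive the contradiction: since $Z$ is by hypothesis the smallest complex-algebraic set containing $E$, the inclusion $E\subset Z_{j_0}$ forces $Z\subset Z_{j_0}$ by minimality, and together with $Z_{j_0}\subset Z$ this gives $Z=Z_{j_0}$, contradicting the assumption that $Z_{j_0}$ is a proper irreducible component. Hence $s=1$ and $Z$ is irreducible. The argument is short, and I expect no genuine obstacle; the only point demanding care is the compatibility between the two notions of closedness in play, which is settled precisely by the fact that complex-algebraic sets are $\AR$-closed, and this is what lets the Zariski decomposition of $Z$ be transported into an $\AR$-closed decomposition of $E$.
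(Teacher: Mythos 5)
Your proof is correct and follows essentially the same argument as the paper: intersect $E$ with the pieces of a decomposition of $Z$, note these intersections are $\AR$-closed, invoke $\AR$-irreducibility to force $E$ into one piece, and contradict the minimality of $Z$. The only cosmetic difference is that the paper works directly with an arbitrary two-piece decomposition $Z=Z_1\cup Z_2$ rather than the full decomposition into irreducible components, which spares the iteration step but changes nothing substantive.
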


\begin{proof}
Suppose that $Z=Z_1\cup Z_2$ is a union of two complex-algebraic sets, with $Z_j\neq Z$, $j=1,2$. The sets $E\cap Z_1$, $E\cap Z_2$ being semialgebraic and arc-symmetric, it follows that $E\subset Z_1$ or $E\subset Z_2$ (by $\AR$-irreducibility of $E$). This contradicts the minimality of $Z$.
\end{proof}

Given a real-analytic or semialgebraic set $E$ and $d\in\N$, we denote by $\mathrm{Reg}_dE$ the locus of points $p\in E$ such that $E_p$ is a germ of a $d$-dimensional manifold.

\begin{lemma}
\label{lem:an-clos-irred}
Let $E$ be a $k$-dimensional $\AR$-irreducible set in $\C^m$.
If $R$ is the smallest real-analytic set containing $\overline{\Reg_k(E)}$, then $R$ is irreducible and of dimension $k$.
\end{lemma}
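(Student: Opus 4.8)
The plan is to pass to the connected components of the top-dimensional regular locus and to exploit that real-analytic sets are themselves arc-symmetric. Write $M:=\Reg_k(E)$; this is a semialgebraic real-analytic manifold of dimension $k$, so it has finitely many connected components $M_1,\dots,M_r$, each a connected semialgebraic real-analytic $k$-submanifold. For the dimension count I would note that the Euclidean closure $\overline{M}$ is semialgebraic of dimension $k$, hence its real-algebraic Zariski closure $Y$ is real-algebraic of dimension $k$ (the Zariski closure of a semialgebraic set has the same dimension). As $Y$ is real-analytic and contains $\overline{M}$, minimality gives $R\subseteq Y$ and $\dim R\le k$, while $R\supseteq M$ gives $\dim R\ge k$; thus $\dim R=k$. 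This also guarantees the existence of $R$, as the (necessarily real-analytic) intersection of all real-analytic subsets of $Y$ containing $\overline{M}$.

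Next I would decompose $R$ along the components. Let $R_j$ be the smallest real-analytic set containing $\overline{M_j}$; then $R=R_1\cup\dots\cup R_r$, since $\bigcup_j R_j$ is real-analytic and is contained in every real-analytic set containing $\overline{M}=\bigcup_j\overline{M_j}$. Each $R_j$ is irreducible of dimension $k$: the dimension is as above applied to $M_j$, and irreducibility holds because a connected real-analytic manifold is irreducible as a real-analytic set — a proper real-analytic subset is nowhere dense by the identity principle, and a connected manifold is not the union of two such — so a splitting $R_j=A\cup B$ into proper real-analytic sets would force $M_j\subseteq A$ (say), hence $\overline{M_j}\subseteq A$ and $R_j\subseteq A$, a contradiction. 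Thus the distinct sets among $R_1,\dots,R_r$ are exactly the irreducible components of $R$, and the whole statement reduces to showing that they coincide.

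This is the heart of the matter, and it is where $\AR$-irreducibility of $E$ enters. The key elementary fact is that every real-analytic set is arc-symmetric: if a real-analytic arc $\gamma$ has $\gamma((-1,0))$ inside the zero set of an analytic $g$, then $g\circ\gamma\equiv 0$, so all of $\gamma$ lies in the set. Fix a component $M_{j_0}$. Since $R_{j_0}$ is real-analytic and contains $M_{j_0}$, arc-symmetry shows that any point reached from $M_{j_0}$ by a real-analytic arc whose initial half lies in $M_{j_0}$ again lies in $R_{j_0}$; iterating, the same holds along chains of such arcs. I would then use Kurdyka's theory to turn $\AR$-irreducibility of $E$ into the statement that, possibly through a chain of real-analytic arcs running through the singular locus $E\setminus M$, one can sweep out a full-dimensional (open) piece of every component $M_j$ starting from $M_{j_0}$. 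Granting this, $R_{j_0}\cap M_j$ has dimension $k$, and since $M_j$ is a connected manifold this forces $M_j\subseteq R_{j_0}$, hence $\overline{M_j}\subseteq R_{j_0}$ and $R_j\subseteq R_{j_0}$. Therefore $R=R_{j_0}$ is irreducible.

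The main obstacle is precisely this arc-reachability input: extracting from the $\AR$-irreducibility of $E$ the fact that the top-dimensional regular components are linked to one another by analytic arcs sweeping open sets, rather than merely by arcs reaching isolated points. The Whitney umbrella $V$ shows that the phenomenon is genuine and that naive connectedness of $\Reg_k(E)$ is too much to hope for: there $\Reg_2(V)$ splits into the two sheets over $\{y_1>0\}$ and $\{y_1<0\}$, so $r=2$, yet $V$ is $\AR$-irreducible, and it is precisely the arcs $t\mapsto(at,t,a^2,0)$ passing through the singular stick that glue the two sheets into a single irreducible real-analytic set. Making this sweeping argument precise and uniform over all components — so that one controls the dimension of $R_{j_0}\cap M_j$ and not merely its nonemptiness — is the step that requires Kurdyka's structure theory for arc-symmetric sets.
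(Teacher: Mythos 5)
Your setup is sound and parallels the paper's: the well-definedness of $R$, the dimension count via the real-algebraic Zariski closure of the semialgebraic set $\overline{\Reg_k(E)}$ (this is exactly how the paper gets $\dim R=k$), and the reduction of irreducibility of $R$ to linking the finitely many components of $\Reg_k(E)$ inside a single irreducible real-analytic set. But the heart of the argument --- extracting from $\AR$-irreducibility of $E$ the arc-reachability of the other components --- is precisely the step you leave unproven (``Granting this\dots''), and you say so yourself in the last paragraph. That is a genuine gap, not a routine verification. The paper closes it by quoting a specific result of Kurdyka, \cite[Cor.\,2.8]{Kur}: if $E$ is $\AR$-irreducible, then any two points $q,p\in\overline{\Reg_k(E)}$ can be joined by an arc $\gamma:[0,1]\to\overline{\Reg_k(E)}$ which is analytic in a neighbourhood of $[0,1]$. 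This is a citable theorem from Kurdyka's structure theory, not something one re-derives ad hoc; without it (or an equivalent statement) your proof does not go through.

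Moreover, your assessment of what the missing input must deliver is stronger than necessary, which makes the gap look harder than it is. You insist one must ``sweep out a full-dimensional open piece'' of each component $M_j$ and control $\dim(R_{j_0}\cap M_j)$, ``not merely its nonemptiness.'' In fact single arcs to individual points suffice. The paper argues by contradiction: if $R=R_1\cup R_2$ properly, then $R_1$ contains some $\overline{E}_1$ (by the Identity Principle, as in your argument) but misses a point $p$ of some other component; take Kurdyka's arc $\gamma$ from $q\in E_1$ to $p$ inside $\overline{\Reg_k(E)}$. Since $\gamma$ starts in $E_1\subset R_1$, the set $\gamma^{-1}(R_1)$ has nonempty interior, and since $R_1$ is real-analytic, the one-variable identity principle applied to its local defining functions composed with $\gamma$ (i.e., the arc-symmetry of real-analytic sets, which you correctly noted) forces $\gamma([0,1])\subset R_1$, hence $p\in R_1$ --- a contradiction. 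Membership in a real-analytic set propagates along the \emph{entire} analytic arc once it holds on a subinterval, so reaching each point by one arc is enough; no dimension control on $R_{j_0}\cap M_j$ and no ``uniform sweeping'' is needed. In short: you identified the right obstacle, but the resolution is to invoke \cite[Cor.\,2.8]{Kur}, and the refinement you believed was required is a red herring.
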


\begin{proof}
Observe first that $R$ is well-defined, since the intersection of the family of all real-analytic sets in $\C^m$ containing $\overline{\Reg_k(E)}$ is itself real-analytic (see, e.g., \cite[Ch.\,V, \S\,2, Cor.\,2]{Nar}). Since the Zariski closure of the semialgebraic set $\overline{\Reg_k(E)}$ (i.e., the smallest real-algebraic set containing $\overline{\Reg_k(E)}$\,) is $k$-dimensional (see \cite[\S\,2.8]{BCR}), it follows that $\dim{R}=k$.

Suppose then that $R=R_1\cup R_2$ is a union of two real-analytic sets, with $R_j\neq R$, $j=1,2$.
Since $\Reg_k(E)\subset R_1\cup R_2$, either $R_1$ or $R_2$ must contain a nonempty open subset of $\Reg_k(E)$; say, the former. Let $\Reg_k(E)=E_1\cup\dots\cup E_s$ be the decomposition into (finitely many, by semialgebraic stratification) connected components. After renumbering the components if needed, we get that $R_1$ contains a nonempty open subset of $E_1$, and hence $E_1\subset R_1$, by the Identity Principle. By closedness of $R_1$, we get $\overline{E}_1\subset R_1$.

Now, after renumbering the components if needed, we can assume that $\overline{E}_1,\dots,$ $\overline{E}_t\subset R_1$ and $\overline{E}_{t+1},\dots,\overline{E}_s\not\subset R_1$, for some $t\leq s$. If $t=s$, then $\overline{\Reg_k(E)}=\overline{E}_1\cup\dots\cup\overline{E}_s\subset R_1$, which contradicts the choice of $R$. Hence $t<s$, and so $E_s\not\subset R_1$.
Choose arbitrary $p\in E_s\setminus R_1$ and $q\in E_1$.
By $\AR$-irreducibility of $E$ and \cite[Cor.\,2.8]{Kur}, there is an arc $\gamma:[0,1]\to\overline{\Reg_k(E)}$ analytic in a neighbourhood of $[0,1]$ and such that $\gamma(0)=q$ and $\gamma(1)=p$. Since $q\in E_1$, we have $\mathrm{Int}(\gamma^{-1}(R_1))\neq\varnothing$. But $R_1$ is arc-symmetric (as a real-analytic set), and so $\gamma([0,1])\subset R_1$. In particular, $p\in R_1$; a contradiction. This shows that $R$ is irreducible.
\end{proof}

\subsubsection*{Proof of Theorem~\ref{thm:ar-irred-const-hcdim}}
If $E$ is of pure dimension $k$, then $E=\overline{\Reg_k(E)}$. Hence, by Lemma~\ref{lem:an-clos-irred}, the smallest real-analytic set $R$ containing $E$ is irreducible and of dimension $k$. The claim (i) thus follows from Theorem~\ref{thm:AS-thm3}.

For the proof of (ii), let $\Reg_k(E)=E_1\cup\dots\cup E_s$ be the decomposition into connected components. By Proposition~\ref{prop:AR-prop5}, for every $j$, there is a unique smallest complex-algebraic set $Z_j$ such that $\overline{(E_j)_p}^{HC}$ is a union of certain analytic-irreducible components of $(Z_j)_p$ for all $p\in E_j$. By part (i) of the theorem, $\dim Z_j=h$ for $j=1,\dots,s$, and so $E=\overline{\Reg_k(E)}$ is contained in an $h$-dimensional complex-algebraic set $Z_1\cup\dots\cup Z_s$.
Now, let $Z$ be the smallest complex-algebraic set that contains $E$. By Lemma~\ref{lem:C-zar-clos-irred}, $Z$ is irreducible. Hence $Z$ is of pure dimension and, clearly, $\dim{Z}\leq\dim(Z_1\cup\dots\cup Z_s)=h$. On the other hand, $\dim Z\geq h$, for otherwise the constant HC-dimension of $E$ would be less than $h$.
\qed
\medskip

\subsection{Corollaries of Theorem~\ref{thm:ar-irred-const-hcdim}}
\label{subsec:postproof2}

We will now derive two simple consequences of Theorem~\ref{thm:ar-irred-const-hcdim}.

\subsubsection*{Proof of Theorem~\ref{thm:ar-irred-is-C-alg}}
Fix $p\in E$ \ and a semialgebraic open neighbourhood $U$ of $p$ in $\C^m$ such that $E\cap U$ is a complex-analytic subset of $U$. Then $E\cap U$ is a complex Nash subset of $U$, by \cite[Prop.\,3]{AR}. Let $X$ be an analytic irreducible component of $E\cap U$ of maximal dimension. 
By \cite[Thm.\,2.10]{Tw}, there is an irreducible complex-algebraic set $Y$ in $\C^m$ such that $X$ is an irreducible component of $Y\cap U$.

Let $Z$ be the unique smallest complex-algebraic set containing $E$. Set $k:=\dim E$.
Since $E\cap U$ is complex-analytic, we get that $\dim_\C X_p=\dim_{HC}E_p=1/2\dim_\R E_p$. Hence $X$ is of pure dimension $h:=k/2$, and $h$ is the constant holomorphic closure dimension of $E$. By Theorem~\ref{thm:ar-irred-const-hcdim}(ii), $\dim Z=h$. By irreducibility of $Y$, we also have that $\dim Y=\dim X=h$. Since $X\subset Z\cap Y$, we have $\dim(Z\cap Y)\geq\dim X=h$, and hence $Z=Y$, by irreducibility.

It follows that $E$ contains a nonempty open subset of $Z$, and so $\dim(\Reg{Z}\cap E)=\dim\Reg{Z}$. By irreducibility of $Z$, $\Reg{Z}$ is a connected (semialgebraic) smooth manifold (cf. \cite[Ch.\,VII,\S\,11.1]{Loj}), and thus $\Reg{Z}\subset E$, by \cite[Rem.\,1.6]{Kur}.
Therefore, $Z=\overline{\Reg{Z}}\subset E\subset Z$, so $E=Z$ is irreducible complex-algebraic, as required.
\qed
\medskip

Another consequence of Theorem \ref{thm:ar-irred-const-hcdim} is the following observation (cf. example at the beginning of this section).

\begin{corollary}
\label{cor:hc-filtration-is-ar}
Let $E$ be an arc-symmetric set in $\C^m$. If $E$ is of pure dimension, then its HC-filtration is $\AR$-closed, that is, $\mathcal{S}^d(E)$ is $\AR$-closed for every $d\in\N$.
\end{corollary}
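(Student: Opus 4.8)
The plan is to reduce the statement to Theorem~\ref{thm:ar-irred-const-hcdim}(i) by decomposing $E$ into its $\AR$-irreducible components and identifying each level set of the HC-filtration with a union of some of these components. Write $E=E_1\cup\dots\cup E_s$ for the (finite) decomposition into $\AR$-irreducible components. First I would verify that, since $E$ is of pure dimension $k$, each $E_j$ is again of pure dimension $k$: if some $E_j$ had a germ of dimension $<k$ at a point $p$, then choosing $p\in E_j\setminus\bigcup_{i\neq j}E_i$ (a nonempty set, relatively open in $E_j$, since the remaining components are closed in the Euclidean topology) would give $E_p=(E_j)_p$ of dimension $<k$, contradicting purity of $E$. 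Hence Theorem~\ref{thm:ar-irred-const-hcdim}(i) applies to every $E_j$, yielding an integer $h_j$ with $\dim_{HC}(E_j)_p=h_j$ for all $p\in E_j$.

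The heart of the argument is the observation that holomorphic closure commutes with finite unions of germs: for germs $A,B$ at $p$ one has $\overline{(A\cup B)_p}^{HC}=\overline{A_p}^{HC}\cup\overline{B_p}^{HC}$, because a finite union of complex-analytic germs is complex-analytic, and any complex-analytic germ containing $A\cup B$ must contain both $\overline{A_p}^{HC}$ and $\overline{B_p}^{HC}$. Applying this at an arbitrary $p\in E$ to the local decomposition $E_p=\bigcup_{j:\,p\in E_j}(E_j)_p$ (note that $(E_j)_p$ is the empty germ when $p\notin E_j$, as each $E_j$ is closed) gives $\dim_{HC}E_p=\max\{h_j:p\in E_j\}$. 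Consequently, for each $d\in\N$,
\[
\mathcal{S}^d(E)=\bigcup_{j:\,h_j\geq d}E_j.
\]

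Since each $E_j$ is $\AR$-irreducible, hence $\AR$-closed, and the $\AR$-closed sets are precisely the closed sets of the noetherian topology of Theorem~\ref{thm:AR-topology}, the finite union on the right is again $\AR$-closed, which proves the corollary. I expect the points requiring the most care to be the purity of the components $E_j$ (needed so that Theorem~\ref{thm:ar-irred-const-hcdim} is applicable) and the compatibility of holomorphic closure with finite unions; once both are established, the identification of $\mathcal{S}^d(E)$ as a union of components, and hence its $\AR$-closedness, is immediate.
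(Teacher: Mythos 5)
Your proposal takes the same route as the paper's proof: decompose $E$ into its $\AR$-irreducible components $E_1,\dots,E_s$, claim that each $E_j$ inherits pure dimension so that Theorem~\ref{thm:ar-irred-const-hcdim}(i) yields a constant HC dimension $h_j$ on $E_j$, and identify $\mathcal{S}^d(E)$ with $\bigcup_{h_j\geq d}E_j$, a finite union of $\AR$-closed sets. Your second ingredient --- that holomorphic closure commutes with finite unions of germs, so that $\dim_{HC}E_p=\max\{h_j:p\in E_j\}$ --- is correct, and you spell it out more carefully than the paper does (the paper covers this point with ``it follows that'').

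The genuine gap is the purity step, and your argument for it does not work. You first fix a point $p$ at which $(E_j)_p$ has dimension $<k$, and then ``choose'' $p$ in $E_j\setminus\bigcup_{i\neq j}E_i$; these are two independent requirements on the same point, and nothing guarantees that a point satisfying both exists. All that purity of $E$ gives is that the locus where $E_j$ has local dimension $<k$ is contained in $\bigcup_{i\neq j}E_i$ --- no contradiction follows. In fact, the claim is false. Let $E=V\cup Q\subset\C^2$, where $V=\{x_1^2-x_2y_1^2=y_2=0\}$ is the Whitney umbrella from the beginning of Section~\ref{sec:holoclos-vs-arcsym} and $Q=\{z_1=0\}$ is the complex line containing its stick $\{x_1=y_1=y_2=0\}$. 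Then $E$ is real-algebraic, hence $\AR$-closed, and of pure dimension $2$, since the only points where $V$ has $1$-dimensional germs (the stick with $x_2<0$) lie on $Q$. Its $\AR$-irreducible components are precisely $V$ and $Q$: both are $\AR$-irreducible ($V$ by \cite[Rem.\,3.2]{Kur}, as recalled in the paper), neither contains the other, and such an irredundant decomposition is unique. So a component of a pure-dimensional $\AR$-closed set need not be pure-dimensional, and Theorem~\ref{thm:ar-irred-const-hcdim} cannot be applied to it. Worse, the same example contradicts the corollary itself: by your (correct) union formula, $\dim_{HC}E_p=\max(\dim_{HC}V_p,\dim_{HC}Q_p)$, and $\dim_{HC}Q_p=1$ at every $p\in Q$, so $\mathcal{S}^2(E)=\mathcal{S}^2(V)$, the two-dimensional part of $V$, which the paper notes at the start of Section~\ref{sec:holoclos-vs-arcsym} is not $\AR$-closed. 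You should be aware that the paper's own proof asserts exactly the same purity claim with no justification, so the step you tried and failed to fill is a real gap there as well; it is not one that can be repaired locally.
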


\begin{proof}
Let $E=E_1\cup\dots\cup E_s$ be the decomposition of $E$ into $\AR$-irreducible components. Then each $E_j$ is of pure dimension, and hence of pure HC dimension, by Theorem~\ref{thm:ar-irred-const-hcdim}. It follows that, for a given $d\in\N$, $\mathcal{S}^d(E)$ is the union of those $E_j$ whose HC dimension is at least $d$. Therefore $\mathcal{S}^d(E)$ is $\AR$-closed as a finite union of $\AR$-closed sets.
\end{proof}

\begin{remark}
\label{rem:not-so-for-inner-dim}
The above corollary has no analogue for the (inner) complex dimension. That is, a pure-dimensional semialgebraic arc-symmetric set need not have $\AR$-closed filtration by the complex dimension. We will use the following notation from \cite{ARS}: For a real analytic set $R$ in $\C^m$ and $d\in\N$, let $\mathcal{A}^d(R)$ denote the set of points $p\in R$ such that $R_p$ contains a complex analytic germ of (complex) dimension $d$.
Now, the irreducible real-algebraic hypersurface
\[
X=\{(z_1,\dots,z_4)\in\C^4: x_1^2-x_2^2+x_3^2=x_4^3\}\,,
\]
where $z_j=x_j+iy_j$, satisfies $\mathcal{A}^1(X)=X\cap\{x_4\geq0\}$ near the point $p=(1,1,0,0)$. (We know from \cite[Thm.\,1.1]{ARS} that $\mathcal{A}^1(X)$ is a closed semialgebraic subset of $X$.) But $X$ is a $7$-manifold near $p$ (in fact, everywhere except at the origin), hence $\mathcal{A}^1(X)$ contains a nonempty open subset of $\Reg_7X=X\setminus\{0\}$. By Kurdyka's \cite[Rem.\,1.6]{Kur}, if $\mathcal{A}^1(X)$ were arc-symmetric it would need to contain the whole $X\setminus\{0\}$.
\end{remark}
\medskip

\subsection{HC dimension on a Nash real-analytic set}
\label{subsec:HC-on-Nash}

Independently of the above, one can show that an irreducible real-algebraic set of pure dimension has constant HC dimension. This is a consequence of the following proposition.

\begin{proposition}
\label{prop:alg-constant-hcdim}
Let $X$ be an irreducible $k$-dimensional real-algebraic set in $\C^m$. Then $\overline{\Reg_k(X)}$ is of constant HC dimension.
Moreover, there is a unique smallest pure-dimensional complex-algebraic set $Z$ in $\C^m$ such that $X\subset Z$ and $\dim_\C Z$ is equal to the constant HC dimension of $\overline{\Reg_k(X)}$.
\end{proposition}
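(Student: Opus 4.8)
The plan is to reduce the statement to the connected case handled by Proposition~\ref{prop:AR-prop5} and then to glue the disconnected pieces together using the real-algebraic irreducibility of $X$. First I would decompose the top-dimensional regular locus $\Reg_k(X)=M_1\cup\dots\cup M_r$ into its (finitely many) connected components; each $M_i$ is a connected semialgebraic real-analytic $k$-dimensional submanifold of an open subset of $\C^m$. Applying Lemma~\ref{lem:AR-lem2} and Proposition~\ref{prop:AR-prop5} to each $M_i$ yields a unique irreducible complex-algebraic set $Z_i$ such that $\overline{(M_i)_p}^{HC}$ is a union of analytic-irreducible components of $(Z_i)_p$ for every $p\in\overline{M_i}$, and the holomorphic closure dimension is constant on $M_i$, equal to some $h_i$. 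Since $Z_i$ is irreducible, and hence of pure dimension, every analytic-irreducible component of the germ $(Z_i)_p$ has dimension $\dim_\C Z_i$; as $\overline{(M_i)_p}^{HC}$ is a nonempty union of such components, this gives $h_i=\dim_\C Z_i$.

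The key step, which bridges the distinct components, is the observation that each $M_i$ is real-Zariski-dense in $X$. Indeed, the smallest real-algebraic set containing $M_i$ has dimension $k$ (by \cite[\S\,2.8]{BCR}) and is contained in the irreducible $k$-dimensional set $X$, hence equals $X$. Now $Z_i$, being complex-algebraic, is in particular a real-algebraic (Zariski-closed) subset of $\R^{2m}\cong\C^m$, and it contains $M_i$; therefore it contains the real-Zariski closure of $M_i$, that is, $X\subseteq Z_i$. Let $W$ denote the smallest complex-algebraic set containing $X$ (the intersection of all such, which is complex-algebraic by noetherianity). A short argument shows $W$ is irreducible: if $W=W_1\cup W_2$ with $W_j\subsetneq W$ complex-algebraic, then each $X\cap W_j$ is real-algebraic and $X=(X\cap W_1)\cup(X\cap W_2)$, so by irreducibility of $X$ one of the $W_j$ already contains $X$, contradicting minimality of $W$. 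From $X\subseteq Z_i$ and minimality of $W$ we get $W\subseteq Z_i$, whence $\dim_\C W\leq\dim_\C Z_i=h_i$; conversely, for $p\in M_i$ the inclusion $\overline{(M_i)_p}^{HC}\subseteq W_p$ forces $h_i\leq\dim_\C W_p=\dim_\C W$, using that the irreducible $W$ is of pure dimension. Thus $h_i=\dim_\C W=:h$ for every $i$, so the common value $h$ is independent of the component.

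It remains to upgrade constancy from $\Reg_k(X)$ to its closure and to identify $Z$. Since $\overline{\Reg_k(X)}=\overline{M_1}\cup\dots\cup\overline{M_r}$, every point $p$ lies in some $\overline{M_i}$; by Proposition~\ref{prop:AR-prop5}, $\overline{(M_i)_p}^{HC}$ is a nonempty union of analytic-irreducible components of the pure-dimensional germ $(Z_i)_p$, so $\dim_{HC}(\overline{M_i})_p=\dim_\C Z_i=h$. As $(\overline{\Reg_k(X)})_p\supseteq(\overline{M_i})_p$ while $\overline{(\overline{\Reg_k(X)})_p}^{HC}\subseteq W_p$ has dimension at most $h$, we conclude $\dim_{HC}(\overline{\Reg_k(X)})_p=h$, proving constancy on the whole closure. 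For the ``moreover,'' $W$ is itself pure-dimensional (being irreducible) of dimension $h$ and contains $X$, and it is the unique smallest complex-algebraic set with this property, so one may take $Z=W$. The main obstacle is precisely the disconnectedness of $\overline{\Reg_k(X)}$ (already visible for an elliptic curve with two real ovals), which prevents a direct appeal to Theorem~\ref{thm:AS-thm3}; the device that overcomes it is the passage through the complex-algebraic sets $Z_i$, each of which must swallow the whole of $X$ by Zariski density, thereby forcing a common holomorphic closure dimension.
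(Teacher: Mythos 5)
Your proposal is correct, and it rests on the same skeleton as the paper's proof: decompose $\Reg_k(X)$ into its finitely many connected components, apply Lemma~\ref{lem:AR-lem2} and Proposition~\ref{prop:AR-prop5} to each component $M_i$ to obtain the irreducible complex-algebraic sets $Z_i$ with $\dim_\C Z_i=h_i$, and exploit the fact that a proper Zariski-closed subset of the irreducible $k$-dimensional set $X$ has dimension strictly less than $k$. Where you differ is in how that last fact is deployed. The paper argues by contradiction: with $d$ the maximal HC dimension on $Y=\overline{\Reg_k(X)}$, it assumes $\mathcal{S}^d(Y)\neq Y$, forms $\tilde{Z}=\bigcup_{\dim Z_j<d}Z_j$, and derives the impossible configuration of a proper real-algebraic subset $\tilde{Z}\cap X$ of $X$ of full dimension $k$. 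You argue directly: the real Zariski closure of each $M_i$ is a $k$-dimensional algebraic subset of $X$, hence equals $X$, so each $Z_i$ (being complex-algebraic, hence real Zariski-closed) contains all of $X$; sandwiching against the smallest complex-algebraic set $W\supseteq X$ then pins every $h_i$ to $\dim_\C W$. Your route buys a little more: it shows in passing that all the $Z_i$ coincide with $W$ (one has $W\subseteq Z_i$ by minimality of $W$, and $Z_i\subseteq W$ by minimality of $Z_i$ over $\overline{M_i}$), and it identifies the set $Z$ of the ``moreover'' clause intrinsically as the smallest complex-algebraic set containing $X$, whereas the paper takes $Z=\bigcup_j Z_j$ and leaves the (easy) verification of minimality and uniqueness implicit. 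Both arguments extend constancy from $\Reg_k(X)$ to its closure the same way, through the validity of Proposition~\ref{prop:AR-prop5} at all points of $\overline{M_i}$, and both invoke the local pure-dimensionality of irreducible complex-algebraic sets to convert ``union of analytic-irreducible components of $(Z_i)_p$'' into a dimension count.
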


\begin{proof}
Set $Y:=\overline{\Reg_k(X)}$.
Let $d:=\max\{\dim_{HC}Y_p:p\in Y\}$, and suppose that $\mathcal{S}^d(Y)$ is a proper subset of $Y$. Let $E_1,\dots,E_s$ be the connected components of $\Reg_k(X)$. Then $Y=\overline{E}_1\cup\dots\cup\overline{E}_s$. By Proposition~\ref{prop:AR-prop5}, for each $j=1,\dots,s$, there is a unique smallest irreducible complex-algebraic set $Z_j$ in $\C^m$ such that $\overline{E}_j\subset Z_j$ and $\dim_\C Z_j=\dim_{HC}(E_j)_p$ for all $p\in E_j$. By our hypothesis, there exists $j_0$ for which $\dim_\C Z_{j_0}<d$.
Set
\[
\tilde{Z}:=\bigcup_{\dim Z_j<d}Z_j\,.
\]
Then $\dim_\C\tilde{Z}<d$, and hence $Y\not\subset\tilde{Z}$. On the other hand, $\tilde{Z}\cap X\supset E_{j_0}$ and so $\dim(\tilde{Z}\cap Y)=\dim E_{j_0}=k$. Therefore the real algebraic set $\tilde{Z}\cap X$ is of dimension $k$ and is a proper subset of $X$. But this is impossible, because $X$ is $k$-dimensional and irreducible. The contradiction proves that $\mathcal{S}^d(Y)=Y$.

For the final assertion of the proposition take $Z=\bigcup_{j=1}^sZ_j$, where the $Z_j$ are as above. Then $Z\cap X\supset Y$, hence $\dim(Z\cap X)=k$. Thus, by irreducibility of $X$, $Z\cap X=X$ and so $X\subset Z$. By the choice of $Z_j$, $Z$ is of pure (complex) dimension $d$, which is the constant HC dimension of $Y$.
\end{proof}

\begin{corollary}
\label{cor:alg-constant-hcdim}
If $X$ is a pure-dimensional irreducible real-algebraic set in $\C^m$, then $X$ is of constant HC dimension.
Moreover, there is a unique smallest pure-dimensional complex algebraic set $Z$ in $\C^m$ such that $X\subset Z$ and $\dim_\C Z$ is equal to the constant HC dimension of $X$.
\end{corollary}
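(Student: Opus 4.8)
The plan is to deduce this corollary directly from Proposition~\ref{prop:alg-constant-hcdim}. The only discrepancy between the proposition and the present statement is that the former is phrased in terms of $\overline{\Reg_k(X)}$ rather than $X$ itself, so the entire argument reduces to the observation that, for a pure-dimensional irreducible real-algebraic set $X$, one has $X=\overline{\Reg_k(X)}$.

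First I would verify this equality. Writing $k=\dim X$, pure-dimensionality means that the germ $X_p$ is $k$-dimensional at every point $p\in X$. The non-manifold locus $X\setminus\Reg_k(X)$ (the singular locus of $X$) is then a semialgebraic, indeed real-algebraic, subset of dimension strictly less than $k$ (by semialgebraic stratification; cf. \cite{BCR}), so the set $\Reg_k(X)$ of smooth $k$-dimensional points is dense in $X$. Hence $\overline{\Reg_k(X)}=X$. Here pure-dimensionality is what prevents $X$ from carrying additional lower-dimensional irreducible pieces that $\overline{\Reg_k(X)}$ would miss.

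With this equality in hand, both assertions are immediate. By Proposition~\ref{prop:alg-constant-hcdim}, $\overline{\Reg_k(X)}$ is of constant HC dimension; since this set equals $X$, the HC dimension is constant on $X$, giving the first claim. For the second, the proposition produces a unique smallest pure-dimensional complex-algebraic set $Z$ with $X=\overline{\Reg_k(X)}\subset Z$ and $\dim_\C Z$ equal to the constant HC dimension of $\overline{\Reg_k(X)}=X$; this is exactly the $Z$ required by the corollary, and its uniqueness is inherited from the proposition.

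I do not anticipate a genuine obstacle, since the statement is a corollary of the preceding proposition. The only point requiring care is the density of $\Reg_k(X)$ in $X$, which is precisely where pure-dimensionality is essential: without it the equality $X=\overline{\Reg_k(X)}$ can fail, and with it the reduction to Proposition~\ref{prop:alg-constant-hcdim} is complete.
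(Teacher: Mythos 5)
Your proposal is correct and is essentially the paper's own (implicit) argument: the corollary is stated there without proof precisely because it reduces to Proposition~\ref{prop:alg-constant-hcdim} via the observation that pure-dimensionality forces $X=\overline{\Reg_k(X)}$, which is exactly your reduction. The only blemish is the parenthetical claim that the non-manifold locus $X\setminus\Reg_k(X)$ is real-algebraic --- in general it is only semialgebraic (for the Whitney umbrella it is a closed half-line), though it is contained in the algebraic singular locus; since your argument only uses semialgebraicity, dimension $<k$, and pure-dimensionality to get density, this does not affect correctness.
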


Proposition~\ref{prop:alg-constant-hcdim} can be, in fact, generalized to the case of Nash real-analytic sets. This is interesting, because an anologous result is false in the transcendental case, as is shown by \cite[Ex.\,6.1]{AS}.

\begin{proposition}
\label{prop:an-constant-hcdim}
Let $R$ be a Nash real-analytic set in $\C^m$ (in the sense of \cite{BCR}). If $R$ is irreducible (as a real-analytic set) and of dimension $k$, then $\overline{\Reg_k(R)}$ is of constant HC dimension.
Moreover, there is a unique smallest pure-dimensional complex-algebraic set $Z$ in $\C^m$ such that $R\subset Z$ and $\dim_\C Z$ is equal to the constant HC dimension of $\overline{\Reg_k(R)}$.
\end{proposition}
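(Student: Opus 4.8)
The plan is to mirror the proof of Proposition~\ref{prop:alg-constant-hcdim} as closely as possible, with the real-algebraic irreducibility of $X$ replaced by the real-analytic irreducibility of $R$, and to supply the missing ingredients wherever algebraicity was previously used. Set $Y:=\overline{\Reg_k(R)}$ and let $d:=\max\{\dim_{HC}Y_p:p\in Y\}$. Suppose for contradiction that $\mathcal{S}^d(Y)$ is a proper subset of $Y$. Writing $\Reg_k(R)=E_1\cup\dots\cup E_s$ as its decomposition into connected components (finitely many, since $R$ is Nash in the sense of \cite{BCR}, hence semialgebraic, so that $\Reg_k(R)$ admits a finite semialgebraic stratification), we get $Y=\overline{E}_1\cup\dots\cup\overline{E}_s$. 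Each $E_j$ is a connected semialgebraic real-analytic submanifold, so Proposition~\ref{prop:AR-prop5} applies and yields a unique smallest irreducible complex-algebraic $Z_j$ with $\overline{E}_j\subset Z_j$ and $\dim_\C Z_j=\dim_{HC}(E_j)_p$ constant on $E_j$.

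Next I would reproduce the separation step: by the contradiction hypothesis some $j_0$ has $\dim_\C Z_{j_0}<d$, so setting $\tilde{Z}:=\bigcup_{\dim Z_j<d}Z_j$ gives $\dim_\C\tilde{Z}<d$, whence $Y\not\subset\tilde{Z}$, while $\tilde{Z}\cap R\supset E_{j_0}$ forces $\dim(\tilde{Z}\cap Y)=\dim E_{j_0}=k$. The crucial point is to conclude a contradiction with the irreducibility of $R$. In the algebraic case this was immediate because $\tilde{Z}\cap X$ was a proper $k$-dimensional real-algebraic subset of the irreducible $k$-dimensional $X$. Here $\tilde{Z}\cap R$ is real-analytic of dimension $k$, and it is a proper subset of $R$ since $Y\not\subset\tilde{Z}$ implies some $\overline{E}_{j_1}\not\subset\tilde{Z}$, so $\tilde{Z}$ omits points of $\Reg_k(R)$. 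As $R$ is irreducible real-analytic of pure dimension $k$, a proper real-analytic subset of $R$ has dimension strictly less than $k$; this contradicts $\dim(\tilde{Z}\cap R)=k$ and shows $\mathcal{S}^d(Y)=Y$, i.e.\ $Y$ has constant HC dimension $d$.

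For the final assertion I would take $Z:=\bigcup_{j=1}^s Z_j$, observe $Z\cap R\supset Y$ so $\dim(Z\cap R)=k$, and use irreducibility of $R$ again to deduce $Z\cap R=R$, i.e.\ $R\subset Z$; by the choice of the $Z_j$ the set $Z$ is of pure complex dimension $d$, the constant HC dimension of $Y$. Uniqueness and minimality follow exactly as in the algebraic case, from the minimality of each $Z_j$ in Proposition~\ref{prop:AR-prop5}.

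I expect the main obstacle to be the one place where algebraicity was genuinely exploited, namely the assertion that a proper real-analytic subset of the irreducible real-analytic set $R$ must have dimension strictly smaller than $k$. For \emph{irreducible} real-analytic sets this is precisely the Identity Principle together with the fact that $R$ is the closure of its $k$-dimensional regular locus, so it should go through cleanly once irreducibility of $R$ as a real-analytic set is invoked; the Nash hypothesis is needed only to guarantee that $R$ is semialgebraic (hence $\Reg_k(R)$ has finitely many components and Proposition~\ref{prop:AR-prop5} is available), not for this dimension count. The remark that the transcendental analogue fails (\cite[Ex.\,6.1]{AS}) signals that it is exactly the interplay of Nash/semialgebraic finiteness with real-analytic irreducibility that makes the argument work, so care is warranted in confirming that every $E_j$ indeed satisfies the hypotheses of Proposition~\ref{prop:AR-prop5}.
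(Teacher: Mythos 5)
Your proof tracks Proposition~\ref{prop:alg-constant-hcdim} faithfully up to the contradiction step, but there it has a genuine gap. You conclude by asserting that a proper real-analytic subset of the irreducible real-analytic set $R$ ``of pure dimension $k$'' must have dimension strictly less than $k$, justified by ``the Identity Principle together with the fact that $R$ is the closure of its $k$-dimensional regular locus.'' Two problems. First, $R$ is not assumed pure-dimensional, and irreducible real-analytic sets need not be (the Whitney umbrella is irreducible of dimension $2$ yet contains its one-dimensional stick), so $R=\overline{\Reg_k(R)}$ is not available. Second, and decisively: the Identity Principle propagates vanishing only \emph{within} a single connected component of $\Reg_k(R)$. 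It does show that the analytic set $\tilde{Z}\cap R$ contains $\overline{E}_{j_0}$, but it provides no mechanism forcing an analytic set that contains $\overline{E}_{j_0}$ to contain a \emph{different} component $E_{j_1}$; whether set-theoretic analytic irreducibility of $R$ excludes a proper analytic subset of full dimension is precisely the crux, not a routine consequence. The statement is in fact true for the sets at hand, but the known route to it is the Whitney--Bruhat theory of C-analytic sets (irreducibility and dimension read off the complexification), which applies here only because $R$ is Nash, hence globally cut out by complexifiable equations. So your closing remark that the Nash hypothesis ``is not needed for this dimension count'' is exactly backwards: this count is where Nash-ness/semialgebraicity must enter, consistent with the failure of the proposition for transcendental $R$ (\cite[Ex.\,6.1]{AS}).

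The paper sidesteps the difficulty by transferring the contradiction to the algebraic category. It sets $\tilde{S}:=\bigcup_{\dim Z_j<d}\overline{E}_j$ and compares the \emph{Zariski} closures $\tilde{X}\supset\tilde{S}$ and $X\supset R$, both of dimension $k$ by \cite[\S\,2.8]{BCR}; since $\tilde{X}\subset\tilde{Z}$ while $S\not\subset\tilde{Z}$, one gets $\tilde{X}\subsetneq X$; and $X$ is irreducible \emph{as a real-algebraic set} --- this is the only place analytic irreducibility of $R$ is used, via the easy minimality argument that $X=X_1\cup X_2$ would decompose $R$ into the proper analytic subsets $R\cap X_i$. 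The contradiction then follows from the standard fact that a proper algebraic subset of an irreducible algebraic set has strictly smaller dimension --- the algebraic avatar of exactly the fact you assumed in the analytic category. The same transfer is needed in your final paragraph: one should deduce $Z\cap X=X$ from algebraic irreducibility of $X$ (hence $R\subset X\subset Z$), rather than invoke irreducibility of $R$ on $Z\cap R$ directly. Your proof becomes correct if you replace the key step by this Zariski-closure argument, or else supply an actual proof (via C-analytic complexification) of the dimension-drop claim you took for granted.
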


\begin{proof}
Set $S:=\overline{\Reg_k(R)}$.
Let $d:=\max\{\dim_{HC}S_p:p\in S\}$, and suppose that $\mathcal{S}^d(S)$ is a proper subset of $S$. Let $E_1,\dots,E_s$ be the connected components of $\Reg_k(S)$. Then $S=\overline{E}_1\cup\dots\cup\overline{E}_s$. As in the proof of Proposition~\ref{prop:alg-constant-hcdim}, for each $j=1,\dots,s$, there is a unique smallest irreducible complex-algebraic set $Z_j$ in $\C^m$ such that $\overline{E}_j\subset Z_j$ and $\dim_\C Z_j=\dim_{HC}(E_j)_p$ for all $p\in E_j$. By our hypothesis, there exists $j_0$ for which $\dim_\C Z_{j_0}<d$.
Set
\[
\tilde{Z}:=\bigcup_{\dim Z_j<d}Z_j\,.
\]
Then $\dim_\C\tilde{Z}<d$, and hence $S\not\subset\tilde{Z}$. Set
\[
\tilde{S}:=\bigcup_{\dim Z_j<d}\overline{E}_j\,.
\]
Let $\tilde{X}$ be the Zariski closure of $\tilde{S}$, and let $X$ be the Zariski closure of $R$. Then $\dim\tilde{X}=\dim\tilde{S}=k$ and $\dim X=\dim R=k$, by \cite[\S\,2.8]{BCR}. Observe that $\tilde{X}$ is a proper subset of $X$. Indeed, $R\not\subset\tilde{X}$, because $S\not\subset\tilde{Z}$ and $\tilde{X}\subset\tilde{Z}$, by construction. On the other hand, $X$ is irreducible: For if $X=X_1\cup X_2$ with $X_1,X_2$ real-algebraic proper subsets of $X$, then $R=(R\cap X_1)\cup(R\cap X_2)$ would be a decomposition of $R$ into proper real-analytic subsets, unless $R\subset X_1$ or $R\subset X_2$ which would contradict the minimality of $X$.
Hence an irreducible real-algebraic set $X$ contains the real-algebraic $\tilde{X}$ as a proper subset and $\dim\tilde{X}=\dim X$, which is impossible. This proves that $\mathcal{S}^d(S)=S$.

The remainder of the proof is the same as that of Proposition~\ref{prop:alg-constant-hcdim}.
\end{proof}

%%%%%%%%%%%%%%%%%%%%%%%%%%%%%%%%%%%%%%%%%%%%%%%%%%
%%%%%%%%%%%%%%%%% Section %%%%%%%%%%%%%%%%%%%%%%%%
%%%%%%%%%%%%%%%%%%%%%%%%%%%%%%%%%%%%%%%%%%%%%%%%%%

\section{Proof of Theorem~\ref{thm:cr-continuation}}
\label{sec:proof-main}

Given the results of the preceeding section, it is now easy to adapt Shafikov's proof of \cite[Thm.\,1.3]{Sh} to the Nash setting. We shall therefore restrict ourselves to highlighting just the key points of the proof.

First, note that the graph $\Gamma_f$ is semialgebraic, of pure dimension $k:=\dim M$. Moreover, $\Gamma_f$ is $\AR$-irreducible, by \cite[Prop.\,5.1]{Kur} and $\AR$-irreducibility of $M$.
Let $\tilde{M}$ be the subset of $M$ on which $f$ is CR. Since $\mathrm{Reg}_k\Gamma_f$ is dense in $\Gamma_f$, there exists $p\in\tilde{M}$ such that $(p,f(p))\in\mathrm{Reg}_k\Gamma_f$. By \cite[Thm.\,5.2]{Kur}, the map $f$ is analytic outside a set $S_f$ with $\dim S_f\leq\dim M-2$. Therefore the point $p$ can actually be chosen so that $\Gamma_f$ near $(p,f(p))$ is the graph of a smooth map on an open neighbourhood $U^p$ of $p$ in $M$.

If the CR codimension of $M$ is zero, then $M$ is a complex-analytic manifold and $f$ is holomorphic on $U^p$. Then the germ $(\Gamma_f)_{(p,f(p))}$ is complex-analytic, and hence $\Gamma_f$ is an irreducible complex-algebraic set in $\C^m\times\C^n$, by Theorem~\ref{thm:ar-irred-is-C-alg}. Let $\pi:\C^m\times\C^n\to\C^m$ and $\pi':\C^m\times\C^n\to\C^n$ be the projections. Then $\pi|_{\Gamma_f}:\Gamma_f\to M$ is a bijective holomorphic mapping, and hence a biholomorphism (see, e.g., \cite[\S\,3.3, Prop.\,3]{C}). It follows that $f=\pi'|_{\Gamma_f}\circ(\pi|_{\Gamma_f})^{-1}$ is holomorphic on the whole $M$. 
If, in turn, the CR dimension of $M$ is zero then there is nothing to show because any function is CR on $M$.

Suppose then that $M$ is a CR manifold of type $(k,l)$ with both $k$ and $l$ positive.
This part of the proof requires reduction to the case of a generic CR submanifold. It should be observed that, by the proof of \cite[Prop.\,1.4]{AS}, a local embedding of $M$ into $\C^{k+l}$ (which makes $M$ generic) can be chosen semialgebraic. Therefore passing through that embedding does not affect the semialgebraicity of $M$ or $f$. The remainder of the proof follows exactly as in \cite{Sh}, with one major simplification: Namely, as a consequence of Theorem~\ref{thm:ar-irred-const-hcdim}, in our case the graph $\Gamma_f$ has constant HC dimension $m$ and it is contained in an irreducible complex-algebraic subset $Z$ of $\C^m\times\C^n$ of dimension $m$ whose projection to $\C^m$ is generically finite over $M$. Therefore one only needs to remove CR singularities of $f$ over the intersection of $M$ with a complex-algebraic subset $\Sigma$ of $\C^m$, defined as the closure of the projection of the algebraic-constructible set
\[
\mathrm{Sng}{Z}\cup\{z\in\Reg{Z}:\mathrm{rank}\;d_z(\pi|_Z)<m\}\,.\qed
\]

%%%%%%%%%%%%%%%%%%%%%%%%%%%%%%%%%%%%%%%%%%%%%%%%%%
%References
%%%%%%%%%%%%%%%%%%%%%%%%%%%%%%%%%%%%%%%%%%%%%%%%%%
\bibliographystyle{amsplain}

\end{document}